\newcommand{\newsection}[1]{\setcounter{equation}{0} \section{#1}}
\newcommand{\bea}{\begin{eqnarray}}
\newcommand{\eea}{\end{eqnarray}}
\newcommand{\clb}{\mathcal{B}}
\newcommand{\cld}{\mathcal{D}}
\newcommand{\cle}{\mathcal{E}}
\newcommand{\clf}{\mathcal{F}}
\newcommand{\clh}{\mathcal{H}}
\newcommand{\clk}{\mathcal{K}}
\newcommand{\clm}{\mathcal{M}}
\newcommand{\clo}{\mathcal{O}}
\newcommand{\clq}{\mathcal{Q}}
\newcommand{\cls}{\mathcal{S}}
\newcommand{\clw}{\mathcal{W}}
\newcommand\Tt{\tilde{T}}
\newcommand\Zp{\mathbb{Z}_+^n}
\newcommand\bH{\mathbb{H}}
\newcommand\bk{\bm{k}}
\newcommand\bl{\bm{l}}
\newcommand{\D}{\mathbb{D}}
\newcommand{\C}{\mathbb{C}}
\newcommand{\z}{\bm{z}}
\newcommand{\w}{\bm{w}}
\newcommand{\raro}{\rightarrow}
\def \qed {\hfill \vrule height6pt width 6pt depth 0pt}
\def\textmatrix#1&#2\\#3&#4\\{\bigl({#1 \atop #3}\ {#2 \atop #4}\bigr)}
\def\dispmatrix#1&#2\\#3&#4\\{\left({#1 \atop #3}\ {#2 \atop #4}\right)}
\newcommand{\be}{\begin{equation}}
\newcommand{\ee}{\end{equation}}
\newcommand{\ben}{\begin{eqnarray*}}
\newcommand{\een}{\end{eqnarray*}}
\newcommand{\NI}{\noindent}
\newcommand{\bi}{\begin{itemize}}
\newcommand{\ei}{\end{itemize}}
\newcommand\la{{\langle }}
\newcommand\ra{{\rangle}}
\newcommand{\dmt}{D_{m,T^*}}
\newcommand{\cmt}{C_{m,T}}
\newcommand{\lnt}{l^2(\mathbb{Z}_+^n, \cld_{m,T^*})}
\newcommand{\arv}{H^2_n}
\newcommand{\wbg}{\bH^2_n}
\newtheorem{Theorem}{\sc Theorem}[section]
\newtheorem{Lemma}[Theorem]{\sc Lemma}
\newtheorem{Proposition}[Theorem]{\sc Proposition}
\newtheorem{Corollary}[Theorem]{\sc Corollary}
\newtheorem{Definition}[Theorem]{\sc Definition}
\newtheorem{Example}[Theorem]{\sc Example}
\newtheorem{Remark}[Theorem]{\sc Remark}
\newtheorem{Note}[Theorem]{\sc Note}
\newtheorem{Question}{\sc Question}
\newtheorem{ass}[Theorem]{\sc Assumption}
\newcommand{\bt}{\begin{Theorem}}
\def\beginlem{\begin{Lemma}}
\def\beginprop{\begin{Proposition}}
\def\begincor{\begin{Corollary}}
\def\begindef{\begin{Definition}}
\def\beginexamp{\begin{Example}}
\def\beginrem{\begin{Remark}}
\def\beginq{\begin{Question}}
\def\beginass{\begin{ass}}
\def\beginnote{\begin{Note}}
\newcommand{\et}{\end{Theorem}}
\def\endlem{\end{Lemma}}
\def\endprop{\end{Proposition}}
\def\endcor{\end{Corollary}}
\def\enddef{\end{Definition}}
\def\endexamp{\end{Example}}
\def\endrem{\end{Remark}}
\def\endq{\end{Question}}
\def\endass{\end{ass}}
\def\endnote{\end{Note}}
\begin{document}

\title[Hypercontractions and factorizations of multipliers]{Hypercontractions and factorizations of multipliers in one and several variables}

\author[Bhattacharjee]{Monojit Bhattacharjee}
\address{Department of Mathematics, Indian Institute of Technology Bombay, Powai, Mumbai, 400076,
India}
\email{monojit12@math.iisc.ernet.in, monojit.hcu@gmail.com}

\author[Das] {B. Krishna Das}
\address{Department of Mathematics, Indian Institute of Technology Bombay, Powai, Mumbai, 400076,
India}
\email{dasb@math.iitb.ac.in, bata436@gmail.com}

\author[Sarkar]{Jaydeb Sarkar}
\address{Indian Statistical Institute, Statistics and Mathematics Unit, 8th Mile, Mysore Road, Bangalore, 560059,
India}
\email{jay@isibang.ac.in, jaydeb@gmail.com}


\subjclass[2010]{47A13, 47A20, 47A45, 47A48, 47A56, 46E22, 47B32, 32A35, 32A36, 32A70}

\keywords{Hypercontractions, weighted Bergman spaces, Bergman
inner functions, analytic models, characteristic functions,
factorizations of multipliers, joint invariant subspaces}

\begin{abstract}
We introduce the notion of characteristic functions
for commuting tuples of hypercontractions on Hilbert spaces,
as a generalization of the notion of Sz.-Nagy and Foias
characteristic functions of contractions.
We present an explicit method to compute characteristic functions
of hypercontractions and relate characteristic functions
by means of the factors of Schur-Agler class of functions
and universal multipliers on the unit ball in $\mathbb{C}^n$.
We also offer some factorization properties
of multipliers. Characteristic functions of hypercontrctions
are complete unitary invariant. The Drury-Arveson space and the weighted
Bergman spaces on the unit ball continues to play a significant role
in our consideration. Our results are new even in the special case of single hypercontractions.
\end{abstract}

\maketitle

\newsection{Introduction}

One of the important aspects of the classical Sz.-Nagy and Foias
theory \cite{NF} is that in order to understand non-self adjoint bounded
linear operators on Hilbert spaces, one should also study (analytic)
function theory. For instance, if $T$ is a pure contraction on a Hilbert
space $\clh$ (that is, $\|T h\|_{\clh} \leq \|h\|_{\clh}$ and $\|T^{*q} h\| \raro 0$ as $q \raro \infty$ and for all $h \in
\clh$), then there exist a (coefficient) Hilbert space $\cle$ and an $M_z^*$-invariant closed
subspace $\clq$ (\textit{model space}) of $H^2_{\cle}(\D)$ such that $T^*$ and $M_z^*|_{\clq}$ are unitarily equivalent. Here $M_z$ is the multiplication operator by the coordinate function $z$ (or, \textit{shift}) on the \textit{$\cle$-valued Hardy space} $H^2_{\cle}(\D)$ over the open unit disc $\D$. Moreover, $\clq$ is uniquely determined by the \textit{characteristic function} of $T$ in an appropriate sense. The Sz.-Nagy and Foias characteristic function of a contraction is a canonical operator-valued analytic function on $\D$ and a complete unitary invariant.

This says, on the one hand, pure contractions on Hilbert spaces dilates to shifts on vector-valued Hardy spaces over the unit disc, and on the other hand, the model spaces (as Hilbert subspaces of
vector-valued Hardy spaces) are explicitly and uniquely determined
by characteristic functions.

In this context, it should be remembered that the concept of
Sz.-Nagy and Foias ``dilations and analytic model theory'', as
above, is most useful in operator theory having important
applications in various fields. This has had an enormous influence
on the development of operator theory and function theory in one and
several variables. Needless to say, one goal of multivariable
operator theory and function theory of several complex variables is to
examine whether commuting tuples of contractions on Hilbert spaces admit analytic models as nice as
Sz.-Nagy and Foias analytic models for contractions.

Following Sz.-Nagy and Foias, Agler, in his seminal
papers \cite{JA1, JA2}, introduced a dilation theory for
$m$-hypercontractions: A pure $m$-hypercontraction dilates to shift
on a vector-valued $m$-weighted Bergman space over the unit disc in
$\mathbb{C}$. Agler's idea was further generalized by Muller and
Vasilescu \cite{MV} to commuting tuples of operators: A pure
$n$-tuple of $m$-hypercontraction dilates to $n$-shifts on a
vector-valued $m$-weighted Bergman space over the unit ball in
$\mathbb{C}^n$ (see Section \ref{sec-prel} for more details).

This paper concerns a complete unitary invariant, namely
characteristic functions, one of the basic questions which center
around the Agler, and Muller and Vasilescu's dilation theory, of
commuting tuples of pure $m$-hypercontractions on Hilbert spaces.

The problem of characteristic functions for hypercontractions and
wandering subspaces of shift invariant subspaces of weighted Bergman
spaces in one-variable goes back to Olofsson \cite{AO2, AO1} (also see Ball and Bolotnikov \cite{BV1}). Then
in \cite{ES}, Eschmeier examined Olofsson's approach in several
variables (also see Popescu \cite{GP}). However, Eschmeier's
approach to characteristic functions appears to be more abstract
than the familiar characteristic functions of single contractions or row contractions \cite{BES}.

Here we take a completely different approach to this problem. Namely,
to each pure $m$-hypercontraction on a Hilbert space, we associate
a canonical triple consisting of a Hilbert space and two bounded linear operators,
and refer to this triple as a characteristic triple of the pure $m$-hypercontraction.
The characteristic function of a pure $m$-hypercontraction,
completely determined by a characteristic triple,
is an operator-valued analytic function on the open unit ball in $\mathbb{C}^n$.
Characteristic triple of a pure $m$-hypercontraction
is unique up to unitary equivalence (in an appropriate sense),
which also yields that the characteristic function
is a complete unitary invariant.
We prove that the joint invariant subspaces of a pure $m$-hypercontraction
is completely determined by the factors of the characteristic function.
Unlike the case of $1$-hypercontractions (or row contractions) \cite{BES},
the characteristic function of a pure $m$-hypercontraction
does not admit a transfer function realization.
However, we prove that the characteristic function of pure $m$-hypercontraction
can be (canonically) represented as a product of a universal multiplier
and a transfer function (or a Drury-Arveson multiplier).
 This result is a byproduct of a general factorization theorem
 for contractive multipliers from vector-valued Drury-Arveson spaces
 to a class of reproducing kernel Hilbert spaces on $\mathbb{B}^n$.
 The general factorization theorem for contractive multipliers
 also yields parametrizations of wandering subspaces of the
 joint shift invariant subspaces of reproducing kernel Hilbert spaces.

The results and the method we introduce here seems to be new even in the single hypercontractions case.

We now describe our main results more precisely. Let $m$ and $n$ be natural numbers,
$\mathbb{Z}_+^n$ be the set of $n$-tuples of non-negative integers, that is
\[
\Zp = \{\bk = (k_1, \ldots, k_n): k_1, \ldots, k_n \in \mathbb{Z}_+\},
\]
and let
\[
\mathbb{B}^n = \{\z = (z_1, \ldots, z_n) \in \C^n: \sum_{i=1}^n |z_i|^2 < 1\},
\]
the open unit ball in $\C^n$. We denote by $\clh$, $\clk$, $\cle$ etc.
as separable Hilbert spaces over $\C$, and by $\clb(\clh)$ the set of
all bounded linear operators on $\clh$.

\NI Unless otherwise stated, $T$ will always mean a commuting $n$-tuple
of bounded linear operators $\{T_1, \ldots, T_n\}$ on some Hilbert
space $\clh$. We also adopt the following notations:
\[
T^{\bk} = T_1^{k_1} \cdots T_n^{k_n} \quad \mbox{and} \quad T^{*
\bk} = T_1^{* k_1} \cdots T_n^{* k_n},
\]
and the multinomial coefficient $\rho_m(\bk)$  as

\begin{equation}\label{eq-m coeff}
\rho_m(\bk) = \frac{(m + |\bk|-1)!}{\bk! (m - 1)!},
\end{equation}
and
\begin{equation}\label{eq-rho0k}
\rho_{0}(\bk) =
\begin{cases} 1 & \mbox{if}~ \bk = \bm{0}
\\
0 & \mbox{otherwise}, \end{cases}
\end{equation}
for all $\bk \in \Zp$. We say that $T$ is a \textit{row contraction} if the
row operator $(T_1, \ldots, T_n) : \clh^n \raro \clh$, denoted again
by $T$ and defined by
\[
T(h_1,\dots,h_n) = \sum_{i=1}^n T_ih_i \quad \quad (h_i \in \clh),
\]
is a contraction. More generally,
if we define the completely positive map $\sigma_T :
\clb(\clh) \raro \clb(\clh)$ by
\[
\sigma_T(X) = \sum_{i=1}^n T_i X T_i^* \quad \quad (X \in
\clb(\clh)),
\]
then $T$ is said
to be an \textit{$m$-hypercontraction} if
\[
(I_{\clb(\clh)} - \sigma_T)^p (I_{\clh}) \geq 0,
\]
for $p = 1, m$. Note that $T$ is an $1$-contraction if and only
if $T$ is a \textit{row contraction} (cf. \cite{WA}). It is now immediate that
\begin{equation}\label{eq-sigmaT}
(I_{\clb(\clh)} - \sigma_T)^p (I_{\clh}) = \sum_{j=0}^p (-1)^j
\binom{p}{j} \sum_{|\bk|=j} \rho_1(\bk) T^{\bk} T^{*\bk}.
\end{equation}
With this notation we get the following interpretation of
hypercontractions: $T$ is an $m$-hypercontraction if and only if $T$
is a row contraction (that is, $(I_{\clb(\clh)} - \sigma_T)
(I_{\clh}) \geq 0$) and $(I_{\clb(\clh)} - \sigma_T)^m (I_{\clh})
\geq 0$. For each $m$-hypercontraction $T$ on $\clh$, we set the
\textit{defect operator} $D_{m, T^*}$ as
\[
D_{m,T^*} = \big[(I_{\clb(\clh)} - \sigma_T)^m
(I_{\clh})\big]^{\frac{1}{2}},
\]
and the \textit{defect space} $\cld_{m, T^*}$ as
\[
\cld_{m, T^*} = \overline{\mbox{ran}} D_{m, T^*}.
\]
An $m$-hypercontraction $T$ is said to be \textit{pure} (cf. \cite{ES, MV, GP}) if
the strong operator limit of $\sigma_T^p(I_{\clh})$ is $0$ as $p\to\infty$.

Now let $T = (T_1, \ldots, T_n)$ be a commuting $n$-tuple of pure $m$-hypercontraction on a Hilbert space
$\clh$. After reviewing the basic definitions and results of the theory of
$m$-hypercontractions in Section \ref{sec-prel},
we prove the existence of a canonical contraction $\cmt: \clh \to \lnt$, a Hilbert space $\cle$,
and bounded linear operators $B \in \clb(\cle, \clh^n)$ and $D \in
\clb(\cle, l^2(\Zp, \cld_{m, T^*}))$ such that the operator matrix
\[
U = \begin{bmatrix} T^* & B \cr \cmt  & D \cr
\end{bmatrix} : \clh \oplus \cle \to
\clh^n \oplus \lnt,
\]
is unitary (see Theorem \ref{thm-ch triple}).

\NI The triple $(\cle, B, D)$ is referred to as a \textit{characteristic
triple} of $T$. The \textit{characteristic function} of $T$
corresponding to the triple $(\cle, B, D)$ is the $\clb(\cle,
\cld_{m, T^*})$-valued analytic function
\[
\Phi_T : \mathbb{B}^n \raro \clb(\cle, \cld_{m, T^*}),
\]
defined by
\[
\Phi_T(z) = \sum_{\bk \in \Zp} \sqrt{\rho_{m-1}(\bk)} D_{\bk}
z^{\bk} + \dmt (I_{\clh} - ZT^*)^{-m}ZB  \quad  \quad \big(\z \in
\mathbb{B}^n \big),
\]
where $Z = (z_1 I_{\clh}, \ldots, z_n I_{\clh})$ for all $\z \in \mathbb{B}^n$, $D_{\bk}$, $\bk \in \Zp$, is the $\bk$-th entry of the
``column matrix'' $D$ (see \eqref{eq-column D} for more details).

The operator-valued analytic function $\Phi_T$ may be viewed as a
counterpart of Sz.-Nagy and Foias characteristic functions for
contractions. Indeed, in Theorem \ref{thm-analytic model} in Section \ref{sec-ch fn}, we prove that $\Phi_T$
defines a partially isometric multiplier from $\arv(\cle)$, the $\cle$-valued Drury-Arveson space over the open unit ball
$\mathbb{B}^n$ \cite{WA}, to $\mathbb{H}_m(\mathbb{B}^n, \cld_{m,T^*})$,
the $\cld_{m,T^*}$-valued weighted Bergman space over $\mathbb{B}^n$. Moreover,
\[
\mathbb{H}_m(\mathbb{B}^n, \cld_{m,T^*}) \ominus \Phi_T
\; \arv(\cle),
\]
is the model space of the pure $m$-hypercontraction $T$ in the sense of Muller and Vasilescu ~\cite{MV}.

Section \ref{sec-4} deals with universal multipliers corresponding
to weight sequences and parameterizations of wandering subspaces of commuting tuples of shift operators.
In Theorem \ref{factom} we prove that any multiplier
from a vector-valued Drury-Arveson space to a (class of) vector-valued
reproducing kernel Hilbert space on $\mathbb{B}^n$ can be factored
as a product of a universal multiplier (which depends only on the kernel
function and coefficient Hilbert space) and a Schur-Agler class of functions.
We also point out that the unique factorization property holds
in the setting of ``inner'' functions in several variables (see Theorem \ref{thm-unique factorization}).
Then, in Section \ref{sec -fact ch fn}, we turn to a canonical factorization of $\Phi_T$.
Recall that \cite{AT} given Hilbert spaces $\cle$ and $\clf$ and an analytic function $\Theta : \mathbb{B}^n \raro \clb(\cle, \clf)$,
$\Theta$ is a contractive multiplier from $\arv(\cle)$ to $\arv(\clf)$ if and only if there exist auxiliary Hilbert space $\clh$
and a unitary
\[
W: \clh \oplus \cle \raro \Big(\bigoplus_{i=1}^n \clh \Big) \oplus \clf,
\]
such that, writing $W$ as
\[
W = \begin{bmatrix}
A & B\\ C & D
\end{bmatrix},
\]
one has the following \textit{transfer function} realization (cf. \cite{AT})
\[
\Theta(\z) = D + C (I_{\clh} - Z A)^{-1} Z B \quad \quad (\z \in \mathbb{B}^n).
\]
In Theorem \ref{thm-factchar}, we prove that $\Phi_T$ factors through a (canonical) transfer function. More specifically
\[
\Phi_T(\z) = \Psi_{\beta(m), \cld_{m, T^*}}(\z) \tilde{\Phi}_T (\z),
\]
where
\[
\tilde{\Phi}_T(\z) = D + \cmt (I_{\clh} - ZT^*)^{-1} ZB,
\]
is the transfer function of the unitary matrix $U$ corresponding to $(\cle, B,D)$, and
\[
\Psi_{\beta(m), \cld_{m, T^*}}(\z) = \begin{cases} \begin{bmatrix} \cdots &
\sqrt{\rho_{m-1}(\bk)} z^{\bk}I_{\cld_{m,T^*}} & \cdots \end{bmatrix}_{\bk
\in \Zp} & \mbox{if~} m \geq 2
\\
\begin{bmatrix}I_{\cld_{1, T^*}} & 0 & 0 & \cdots \end{bmatrix} & \mbox{if~} m=1, \end{cases}
\]
for all $\z \in \mathbb{B}^n$. Here $\Psi_{\beta(m), \cld_{m, T^*}}$ is the universal multiplier from $\arv(\lnt)$ to $\bH_m(\mathbb{B}^n, \cld_{m, T^*})$. In the final section, Section \ref{sec 6}, we link up our results with characteristic functions of pure row contractions \cite{BES}.


\newsection{Preliminaries and Characteristic triples}\label{sec-prel}

We begin by exploring natural examples of pure $m$-hypercontractions. Let
$p$ be a natural number, and let
\[
K_p(\z, \w) = (1 - \displaystyle \sum_{i=1}^n z_i \bar{w}_i)^{-p}
\quad \quad (\z, \w \in \mathbb{B}^n).
\]
Then $K_p$ is a positive-definite kernel on $\mathbb{B}^n$. Denote
by $\bH_{p}$ the reproducing kernel Hilbert space (of scalar-valued
analytic functions on $\mathbb{B}^n$) corresponding to the kernel
$K_p$. If $\w \in \mathbb{B}^n$, then we let $K_p(\cdot, \w)$ denote
the function in $\bH_p$ defined by
\[
(K_p(\cdot, \w))(\z) = K_p(\z, \w) \quad \quad (\z \in \mathbb{B}^n).
\]
Given a Hilbert space $\cle$, we denote by $\bH_p(\mathbb{B}^n,
\cle)$ the reproducing kernel Hilbert space corresponding to the
$\clb(\cle)$-valued kernel
\[
(\z, \w) \mapsto K_p(\z, \w) I_{\cle},
\]
on $\mathbb{B}^n$. We simply write $\bH_{p}$ instead of
$\bH_{p}(\mathbb{B}^n, \mathbb{C})$ if $\cle =\mathbb{C}$. Note that
for $\z \in\mathbb{B}^n$, we have (cf. page 983, \cite{MV})
\[
(1 - \sum_{i=1}^n z_i)^{-p} = \sum_{\bk \in \Zp} \rho_p(\bk)
z^{\bk},
\]
where $z^{\bk} = z_1^{k_1} \cdots z_n^{k_n}$ for all $\bk \in \Zp$.
Then
\[
\bH_{p}(\mathbb{B}^n, \cle) = \{f = \sum_{\bk \in \Zp} a_{\bk}
z^{\bk} \in \clo(\mathbb{B}^n, \cle): \|f\|^2 :=\sum_{\bk \in \Zp}
\frac{\|a_{\bk}\|_{\cle}^2}{\rho_p({\bk})} < \infty \}.
\]
In particular, $\bH_{1}(\mathbb{B}^n, \cle)$, $\bH_{n}(\mathbb{B}^n,
\cle)$ and $\bH_{n+1}(\mathbb{B}^n, \cle)$ represents the well-known
$\cle$-valued \textit{Drury-Arveson space}, the
Hardy space and the Bergman space over $\mathbb{B}^n$, respectively.
Moreover, for each $p > n+1$, $\bH_{p}(\mathbb{B}^n, \cle)$ is an
$\cle$-valued weighted Bergman space over $\mathbb{B}^n$ (cf. \cite{Zhu}).
Following standard notation, we denote the Drury-Arveson space $\bH_{1}(\mathbb{B}^n, \cle)$
by $H^2_n(\cle)$. Again, if $\cle = \mathbb{C}$, then we simply denote $H^2_n(\mathbb{C})$ by $H^2_n$.

\NI An easy computation shows that $(M_{z_1}, \ldots, M_{z_n})$, the
commuting tuple of multiplication operators by the coordinate
functions $\{z_1, \ldots, z_n\}$, defines a pure $m$-hypercontraction on
$\bH_{p}(\mathbb{B}^n, \cle)$ for all $m \leq p$.

\NI To simplify the notation, we often identify $\bH_{p} \otimes
\cle$ with $\bH_{p}(\mathbb{B}^n, \cle)$ via the unitary map defined by
$\z^{\bk} \otimes \eta \mapsto \z^{\bk} \eta$, for all $\bk \in \Zp$ and $\eta \in \cle$.
As a consequence, we can identify $M_z \otimes I_{\cle}$ on
$\bH_{p} \otimes \cle$ with $M_z$ on $\bH_{p}(\mathbb{B}^n, \cle)$
(as tuples of operators).

Recall that a holomorphic map $\Phi : \mathbb{B}^n \raro
\clb(\cle_1, \cle_2)$, for some Hilbert spaces $\cle_1$ and
$\cle_2$, is said to be a \textit{multiplier} from $\arv(\cle_1)$
to $\bH_m(\mathbb{B}^n, \cle_2)$ if
\[
\Phi f \in \bH_m(\mathbb{B}^n, \cle_2),
\]
for all $f \in \arv(\cle_1)$. We denote by $\clm(\arv(\cle_1), \bH_m(\mathbb{B}^n, \cle_2))$ the set of all multipliers from
$\arv(\cle_1)$ to $\bH_m(\mathbb{B}^n, \cle_2)$. Note also
that a multiplier $\Phi \in \clm(\arv(\cle_1),
\bH_m(\mathbb{B}^n, \cle_2))$ gives rise to a bounded linear
operator
\[
M_{\Phi}: \arv(\cle_1) \raro \bH_m(\mathbb{B}^n, \cle_2), \quad
\quad f \mapsto \Phi f,
\]
known as the multiplication operator corresponding to $\Phi$.
Multipliers can be characterized as follows: Let $X \in
\clb(\arv(\cle_1), \bH_m(\mathbb{B}^n, \cle_2))$. Then $X \in
\clm(\arv(\cle_1), \bH_m(\mathbb{B}^n, \cle_2))$ if and only if
\[
X ( M_{z_i}\otimes I_{\cle_1}) = ( M_{z_i}\otimes I_{\cle_2}) X,
\]
for all $i = 1, \ldots, n$. For more details about multipliers on reproducing
kernel Hilbert spaces in our present context, we refer to \cite{JS2}.

Finally, recall also that if $T$ is a pure
$m$-hypercontraction on $\clh$, then the \textit{canonical dilation
map} (see \cite{MB}, and also see \cite{MV}) $\Pi_m : \clh \to
\mathbb{H}_m(\mathbb{B}^n, \cld_{m,T^*})$, defined by
\begin{equation}\label{eq-Pim}
(\Pi_mh)(\z) = D_{m,T^*}(I_{\clh} - ZT^*)^{-m} h \quad \quad
(h\in\clh, \z \in \mathbb{B}^n),
\end{equation}
is an isometry and
\[
\Pi_mT_i^* = M_{z_i}^* \Pi_m,
\]
for all $i = 1, \ldots, n$, where $Z : \clh^n \raro \clh$ is the row
contraction $Z = (z_1 I_{\clh}, \ldots, z_n I_{\clh})$, $\z
\in \mathbb{B}^n$. In particular, if
\[
\clq_{m,T} = \Pi_m \clh,
\]
then $\clq_{m,T}$, the \textit{canonical model space} corresponding
to $T$, is a joint $(M_{z_1}^*, \ldots, M_{z_n}^*)$-invariant
subspace and $(P_{\clq_{m,T}} M_{z_1}|_{\clq_{m,T}}, \ldots,
P_{\clq_{m,T}} M_{z_1}|_{\clq_{m,T}})$ on $\clq_{m,T}$ and $(T_1,
\ldots, T_n)$ on $\clh$ are unitarily equivalent (see \cite{MV,
JS2}). This shows, in particular, that pure $m$-hypercontractions on
Hilbert spaces are precisely (in the sense of unitary equivalence)
the compressions of $M_z$ to joint co-invariant subspaces of
vector-valued $\bH_m$-spaces.

\NI On the other hand, $\cls_{m,T}$, the \textit{canonical invariant
subspace} corresponding to $T$, defined by
\[
\cls_{m,T} = \mathbb{H}_m(\mathbb{B}^n, \cld_{m,T^*}) \ominus \Pi_m
\clh,
\]
is a joint $(M_{z_1}, \ldots, M_{z_n})$-invariant subspace of
$\bH_m(\mathbb{B}^n, \cld_{m,T^*})$, and hence by a
Beurling-Lax-Halmos type theorem (see \cite[Theorem 4.4]{JS2}) it
follows that
\begin{equation}\label{eq-SmT}
\cls_{m,T} =\Phi \arv(\cle),
\end{equation}
for some Hilbert space $\cle$ and a partially isometric multiplier
$\Phi \in \clm(\arv(\cle), \mathbb{H}_m(\mathbb{B}^n,
\cld_{m,T^*}))$.

We turn now to the main content of this section. Let $T$ be a pure
$m$-hypercontraction on $\clh$. Since
\[
\langle \z^{\bk}, \z^{\bl} \rangle_{\bH_m} = \frac{1}{\rho_m(\bk)}
\delta_{\bk, \bl},
\]
for all $\bk, \bl \in \Zp$, and the canonical dilation map $\Pi_m$
is an isometry, that is, $\Pi_m^* \Pi_m = I_{\clh}$, and
\[
(\Pi_m h)(\z) = \sum_{\bk \in \Zp} \rho_m(\bk)(D_{m,T^*}
T^{*\bk}h) \z^{\bk} \quad \quad (\z \in \mathbb{B}^n, h \in \clh),
\]
it follows that
\begin{equation}\label{eq-gamma I}
\sum_{\bk \in \Zp} \rho_m(\bk) T^{\bk} D^2_{m,T^*} T^{*\bk}
=I_{\clh}.
\end{equation}
Moreover, since
\[
\rho_{0}(\bk) =
\begin{cases} 1 & \mbox{if}~ \bk = \bm{0}
\\
0 & \mbox{otherwise}, \end{cases}
\]
an easy computation shows that (cf. page 96, \cite{ES})
\begin{equation}\label{eq-gamma m-1}
\rho_{m}(\bk) = \rho_{m-1}(\bk) + \mathop{\sum_{i=1}^n}_{k_i \geq 1}
\rho_m(\bk - \bm{e}_i),
\end{equation}
where
\[
\bk - \bm{e}_i =
\begin{cases} (k_1, \ldots, k_{i-1}, k_i - 1, k_{i+1},
\ldots, k_n) & \mbox{if}~ k_i \geq 1
\\
0 & \mbox{if}~ k_i = 0, \end{cases}
\]
and $\bk \in \Zp$. Hence, by \eqref{eq-gamma I} we have
\[
\begin{split}
\sum_{\bk \in \Zp} \rho_{m-1}(\bk) T^{\bk} D^2_{m,T^*} T^{*\bk}
\leq \sum_{\bk \in \Zp} \rho_m(\bk) T^{\bk} D^2_{m,T^*} T^{*\bk}
=I_{\clh}.
\end{split}
\]
Then the linear map $\cmt: \clh \to \lnt$ defined by
\begin{equation}\label{eq-cmt}
\cmt(h) = (\sqrt{\rho_{m-1}(\bk)} \dmt T^{*\bk} h)_{\bk \in \Zp}
\quad \quad \quad (h\in\clh),
\end{equation}
is a contraction. It is often convenient to represent $\cmt$ as the
column matrix
\begin{equation}\label{eq-cmt matrix}
\cmt = \begin{bmatrix} \vdots \\ \sqrt{\rho_{m-1}(\bk)}
\dmt T^{*\bk}
\\ \vdots
\end{bmatrix}_{\bk \in \Zp}.
\end{equation}
Now using \eqref{eq-gamma I} twice, we have
\[
\begin{split}
I_{\clh} - \cmt^* \cmt & = \sum_{\bk \in \Zp} \rho_m(\bk)
T^{\bk}D^2_{m,T^*}T^{*\bk} - \sum_{\bk \in \Zp} \rho_{m-1}(\bk)
T^{\bk}D^2_{m,T^*}T^{*\bk}
\\
& = \sum_{\bk \in \Zp} \Big( \rho_m(\bk) - \rho_{m-1}(\bk)\Big)
T^{\bk}D^2_{m,T^*}T^{*\bk}
\\
&= \sum_{\bk \in \Zp} \Big( \mathop{\sum_{i=1}^n}_{k_i \geq 1}
\rho_m(\bk - \bm{e}_i) \Big)T^{\bk}D^2_{m,T^*}T^{*\bk}
\\
&= \sum_{i=1}^n \sum_{\bk \in \Zp} \rho_m(\bk)T^{\bk+ \bm{e}_i}
D^2_{m,T^*}T^{* (\bk + \bm{e}_i)}
\\
&= \sum_{i=1}^n T_i \Big( \sum_{\bk \in \Zp}
\rho_m(\bk)T^{\bk}D^2_{m,T^*}T^{*\bk} \Big) T^*_i
\\
& = \sum_{i=1}^n T_i T_i^*,
\end{split}
\]
that is
\begin{equation}\label{eq-TCI}
TT^* + \cmt^* \cmt  = I_{\clh},
\end{equation}
and therefore
\[
X_{T} = \Bigl[\begin{matrix} T^* \\ \cmt
\end{matrix} \Bigr]: \clh \to \clh^n \oplus \lnt,
\]
is an isometry. By adding a suitable Hilbert space $\cle$, we extend
$X_{T}$ on $\clh$ to a unitary $U : \clh \oplus \cle \to \clh^n
\oplus \lnt$, and set
\[
U: = \begin{bmatrix} X_{T} & Y_T \end{bmatrix} : \clh \oplus \cle \to \clh^n \oplus \lnt,
\]
where $Y_T =  U|_{\cle}: \cle \raro \clh^n \oplus \lnt$. If we set $Y_T = \begin{bmatrix} B \\ D \end{bmatrix}$
where $B  = P_{\clh^n} Y_T \in \clb(\cle, \clh^n)$ and $D = P_{\lnt}
Y_T \in \clb (\cle, \lnt)$, then
\[
U = \begin{bmatrix}
T^* & B \cr \cmt  & D \cr
\end{bmatrix} : \clh \oplus \cle \to \clh^n \oplus \lnt.
\]
Summarizing, we have the following result.

\begin{Theorem}\label{thm-ch triple}
Let $T$ be a pure $m$-hypercontraction on $\clh$.
Then the map $\cmt: \clh \to \lnt$ defined by
\[
\cmt(h) = \Big(\sqrt{\rho_{m-1}(\bk)} \dmt T^{*\bk} h\Big)_{\bk \in
\Zp} \quad \quad \quad (h\in\clh),
\]
 is a contraction, and there exist a Hilbert space $\cle$ and a
bounded linear operator
\[
Y_T = \begin{bmatrix} B \\ D \end{bmatrix}: \cle \raro \clh^n \oplus \lnt,
\]
such that
\[
\begin{bmatrix} X_T & Y_T \end{bmatrix}   =
\begin{bmatrix}
T^* & B \cr \cmt  & D \cr
\end{bmatrix} : \clh \oplus \cle \to
\clh^n \oplus \lnt,
\]
is a unitary.
\end{Theorem}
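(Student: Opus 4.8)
The plan is to realize the column operator $X_T = \begin{bmatrix} T^* \\ \cmt \end{bmatrix} : \clh \to \clh^n \oplus \lnt$ as an isometry and then to dilate it to a unitary by the standard defect-space construction; since the second assertion asks only for the \emph{existence} of $\cle$ and $Y_T$, the entire content is the isometry of $X_T$, which is equivalent to the operator identity $TT^* + \cmt^*\cmt = I_\clh$ of \eqref{eq-TCI}. Contractivity of $\cmt$ (the first assertion) will come out of the same computation, being the weaker inequality $\cmt^*\cmt \le I_\clh$.

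First I would record the fundamental identity \eqref{eq-gamma I}, namely $\sum_{\bk \in \Zp} \rho_m(\bk)\, T^{\bk}\dmt^2 T^{*\bk} = I_\clh$; this is exactly the statement that the canonical dilation map $\Pi_m$ of \eqref{eq-Pim} is an isometry, and it is here that purity of $T$ enters, since $\sigma_T^p(I_\clh) \to 0$ is what forces the monotone partial sums of the series to converge (in the strong operator topology) to the full identity rather than to a smaller positive operator. From the defining formula for $\cmt$ one reads off $\cmt^*\cmt = \sum_{\bk \in \Zp} \rho_{m-1}(\bk)\, T^{\bk}\dmt^2 T^{*\bk}$, and since $\rho_{m-1}(\bk) \le \rho_m(\bk)$ termwise, comparison with \eqref{eq-gamma I} already gives $\cmt^*\cmt \le I_\clh$, i.e.\ $\cmt$ is a contraction.

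The heart of the matter is to upgrade this inequality to the exact identity $I_\clh - \cmt^*\cmt = TT^*$. Subtracting the two series gives $I_\clh - \cmt^*\cmt = \sum_{\bk}\bigl(\rho_m(\bk) - \rho_{m-1}(\bk)\bigr) T^{\bk}\dmt^2 T^{*\bk}$, and the Pascal-type recurrence \eqref{eq-gamma m-1} rewrites the coefficient as $\sum_{i\,:\,k_i \ge 1}\rho_m(\bk - \bm{e}_i)$. After interchanging the outer finite sum over $i$ with the series and reindexing the $i$-th summand by $\bk \mapsto \bk + \bm{e}_i$ (so that it again runs over all of $\Zp$), commutativity lets me factor $T_i$ out on the left and $T_i^*$ on the right, exposing the inner sum as \eqref{eq-gamma I} once more. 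It therefore collapses to $\sum_{i=1}^n T_i I_\clh T_i^* = \sum_{i=1}^n T_i T_i^* = TT^*$, which is \eqref{eq-TCI} and hence the isometry of $X_T$.

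The step I expect to be most delicate is precisely this reindexing: one must track the constraint $k_i \ge 1$ correctly when translating indices by $\bm{e}_i$, and must justify interchanging the finite sum over $i$ with the infinite series and applying \eqref{eq-gamma I} term-by-term, all within the strong operator topology, where the manipulations are legitimate because the partial sums are positive and increase monotonically to bounded limits. Once $X_T$ is known to be an isometry the rest is routine: its range is a closed subspace of $\clh^n \oplus \lnt$, so I let $\cle$ be a Hilbert space unitarily isomorphic to the orthogonal complement $(\mbox{ran}\, X_T)^\perp$, take $Y_T : \cle \to \clh^n \oplus \lnt$ to be that isometric isomorphism onto $(\mbox{ran}\, X_T)^\perp$, and set $B = P_{\clh^n} Y_T$ and $D = P_{\lnt} Y_T$. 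Then $\begin{bmatrix} X_T & Y_T \end{bmatrix}$ maps $\clh \oplus \cle$ isometrically onto $\clh^n \oplus \lnt$, so it is the desired unitary.
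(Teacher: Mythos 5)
Your proposal is correct and follows essentially the same route as the paper: both establish $\cmt^*\cmt \le I_\clh$ from \eqref{eq-gamma I} via the termwise bound $\rho_{m-1}(\bk) \le \rho_m(\bk)$, then use the recurrence \eqref{eq-gamma m-1} with the same reindexing $\bk \mapsto \bk + \bm{e}_i$ to derive $TT^* + \cmt^*\cmt = I_\clh$, and finally extend the isometry $X_T$ to a unitary by adjoining $\cle \cong (\mbox{ran}\, X_T)^\perp$ — which is exactly the explicit characteristic triple $(\cle_T, B_T, D_T)$ the paper records at the end of Section \ref{sec-prel}. Your added attention to the strong-operator-topology justification of the series manipulations and to where purity enters (via the isometry of $\Pi_m$) is sound and consistent with the paper's implicit reasoning.
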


This motivates the following definition: Let $T$ be a pure
$m$-hypercontraction on $\clh$, $m \geq 1$. A triple $(\cle, B, D)$
consisting of a Hilbert space $\cle$ and bounded linear operators $B
\in \clb(\cle, \clh^n)$ and $D \in \clb (\cle, \lnt)$ is said to be
a \textit{characteristic triple} of $T$ if
\[
\begin{bmatrix} X_T & Y_T \end{bmatrix} :=
\begin{bmatrix}
T^* & B \cr \cmt  & D \cr
\end{bmatrix} : \clh \oplus \cle \to
\clh^n \oplus \lnt
\]
is a unitary.

Characteristic triple of a pure $m$-hypercontraction is unique in the following sense:

\begin{Theorem}\label{Unique:triple}
Let $T$ be a pure $m$-hypercontraction on $\clh$, and let $(\cle, B, D)$ and $(\tilde{\cle},\tilde{B}, \tilde{D})$ be characteristic triples of $T$. Then there exists a unitary $U : \tilde{\cle}\to \cle$ such that
\[
(\tilde{\cle},\tilde{B}, \tilde{D})=(U^*\cle, BU, DU).
\]
\end{Theorem}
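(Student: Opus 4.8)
The plan is to read off, purely from the unitarity of the block operator, that $Y_T$ is forced to be a unitary from $\cle$ onto a subspace of $\clh^n\oplus\lnt$ that is intrinsic to $T$, and then to transport one triple onto the other through this common subspace.

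First I would record that $X_T$ is already an isometry by \eqref{eq-TCI}, so that $X_T X_T^*$ is the orthogonal projection onto $\mbox{ran}\, X_T$. Unitarity of $\begin{bmatrix} X_T & Y_T \end{bmatrix}$ is then equivalent to the block identities
\[
X_T^* X_T = I_{\clh}, \quad Y_T^* Y_T = I_{\cle}, \quad X_T^* Y_T = 0, \quad X_T X_T^* + Y_T Y_T^* = I_{\clh^n \oplus \lnt}.
\]
The first and last together give $Y_T Y_T^* = I - X_T X_T^* = P_{\cll}$, the orthogonal projection onto $\cll := (\clh^n \oplus \lnt) \ominus \mbox{ran}\, X_T$, while $Y_T^* Y_T = I_{\cle}$ says $Y_T$ is an isometry. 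Hence $Y_T : \cle \to \cll$ is a unitary. The crucial observation is that $X_T = \begin{bmatrix} T^* \\ \cmt \end{bmatrix}$ depends only on $T$, so the subspace $\cll$ is one and the same for every characteristic triple of $T$. Running the identical argument for $(\tilde{\cle}, \tilde{B}, \tilde{D})$ shows that $\tilde{Y}_T = \begin{bmatrix} \tilde{B} \\ \tilde{D} \end{bmatrix} : \tilde{\cle} \to \cll$ is also a unitary onto the same $\cll$.

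Next I would set $U := Y_T^* \tilde{Y}_T : \tilde{\cle} \to \cle$. Since $\tilde{Y}_T$ is a unitary from $\tilde{\cle}$ onto $\cll$ and $Y_T^*|_{\cll}$ is the inverse of the unitary $Y_T : \cle \to \cll$, the composition $U$ is a unitary from $\tilde{\cle}$ onto $\cle$. To conclude, I would verify the two required identities simultaneously by computing $Y_T U$: using $Y_T Y_T^* = P_{\cll}$ together with $\mbox{ran}\, \tilde{Y}_T \subseteq \cll$,
\[
Y_T U = Y_T Y_T^* \tilde{Y}_T = P_{\cll} \tilde{Y}_T = \tilde{Y}_T.
\]
Reading off the two block components of $Y_T U = \tilde{Y}_T$ yields $BU = \tilde{B}$ and $DU = \tilde{D}$, which is exactly $(\tilde{\cle}, \tilde{B}, \tilde{D}) = (U^*\cle, BU, DU)$.

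The argument is essentially formal once this structural fact is in place, so I do not anticipate a serious obstacle. The only step demanding any care is the very first one, namely translating unitarity of the block matrix into the assertion that $Y_T$ is a unitary onto the fixed complement $\cll$, and in particular identifying the range projection $Y_T Y_T^*$ with $I - X_T X_T^*$; everything afterwards is bookkeeping with the single subspace $\cll$.
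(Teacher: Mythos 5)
Your proof is correct and takes essentially the same route as the paper's: both arguments read off from unitarity that $Y_T$ and $\tilde{Y}_T$ are isometries of $\cle$ and $\tilde{\cle}$ onto the common subspace $\big(\mbox{ran}\, X_T\big)^\perp$, which depends only on $T$ since $X_T$ does. The sole difference is cosmetic --- you construct the intertwining unitary explicitly as $U = Y_T^* \tilde{Y}_T$, whereas the paper obtains the same $U$ by citing Douglas's lemma.
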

\begin{proof}
Since
\[
 \begin{bmatrix} X_{T} & Y_T \end{bmatrix}=
 \begin{bmatrix} T^*& B \cr \cmt & D\cr  \end{bmatrix}: {\clh} \oplus \cle \to
\clh^n \oplus l^2(\Zp, \cld_{m, T^*}),
\]
and
\[
 \begin{bmatrix} X_{T} & \tilde{Y}_T \end{bmatrix}=
 \begin{bmatrix} T^*& \tilde{B} \cr \cmt & \tilde{D}\cr  \end{bmatrix}: {\clh} \oplus \tilde{\cle} \to
\clh^n \oplus l^2(\Zp, \cld_{m, T^*}),
\]
are unitary operators, it follows that $Y_T= \begin{bmatrix} B \\ D \end{bmatrix}$ and $\tilde{Y}_T =  \begin{bmatrix} \tilde{B} \\ \tilde{D} \end{bmatrix}$
 are isometries and
\[
\mbox{ran} Y_T=\mbox{ran} \tilde{Y}_T.
\]
By Douglas lemma, we have
\[
\tilde{Y}_T = Y_T U,
\]
for some unitary $U : \tilde{\cle} \raro \cle$, and hence
\[
 \tilde{B} = B U \ \mbox{and}  \ \tilde{D} = D U.
 \]
This completes the proof.
\end{proof}

Characteristic triples of pure $m$-hypercontractions, $m \geq 1$, will play a key role in what follows. The special case $m=1$ will be treated in the
final section of this paper.

We conclude this section with an explicit construction of a characteristic triple of an $m$-hypercontraction $T$ on a Hilbert space $\clh$: Let $X_T$ be as above. Consider
\[
 \cle_T = \Big(\mbox{ran} X_{T}\Big)^\perp,
\]
and the inclusion map
\[
 i= \begin{bmatrix}
B_T \\ D_T
\end{bmatrix}: \Big(\mbox{ran} X_{T}\Big)^\perp \hookrightarrow \clh^n \oplus \lnt.
\]
Then it readily follows that $(\cle_T, B_T, D_T)$ is a characteristic triple of $T$.

\newsection{Characteristic Functions}\label{sec-ch fn}

In this section, we continue, by means of operator-valued analytic
functions corresponding to characteristic triples, the exploration
of pure $m$-hypercontractions. Here the operator-valued analytic
functions will play a similar role as Sz.-Nagy and Foias
characteristic functions for contractions.

Let $T$ be a pure $m$-hypercontraction on $\clh$, and let
$(\cle,B,D)$ be a characteristic triple of $T$. Note that $D$ can be represented by a column matrix
\begin{equation}\label{eq-column D}
D= \begin{bmatrix}
\vdots \\ D_{\bk} \\ \vdots
\end{bmatrix}_{\bk \in \Zp} : \cle \raro \lnt,
\end{equation}
where $D_{\bk} \in \clb(\cle, \cld_{m, T^*})$, $\bk \in \Zp$. Define
\[
\Phi_T: \mathbb{B}^n \to \clb(\cle, \cld_{m,T^*}),
\]
by
\begin{equation} \label{char fn}
\Phi_T(\z) = \Big(\sum_{\bk \in \Zp} \sqrt{\rho_{m-1}(\bk)} D_{\bk}
z^{\bk} \Big) + \dmt (I_{\clh} -ZT^*)^{-m}ZB  \quad  \quad (\z\in
\mathbb{B}^n).
\end{equation}
Notice that $\Phi_T$ is a $\clb(\cle, \cld_{m,T^*})$-valued analytic
function on $\mathbb{B}^n$. We call $\Phi_T$ the
\textit{characteristic function} of $T$ corresponding to the characteristic triple $(\cle,B,D)$.

We claim that $\Phi_T$ is a partially isometric multiplier from
$\arv(\cle)$ to $\bH_m(\mathbb{B}^n, \cld_{m, T^*})$. To this end, first
we proceed to compute $\Phi_T(\z) \Phi_T(\w)^*$, $\z, \w \in
\mathbb{B}^n$. For simplicity, set
\[
x_{\bk} = \sqrt{\rho_{m-1}(\bk)},
\]
and
\[
X(\z) = \sum_{\bk \in \Zp} x_{\bk} D_{\bk} z^{\bk}, \quad \quad
Y(\z) = \dmt (I_{\clh} -ZT^*)^{-m}ZB,
\]
for all $\z \in \mathbb{B}^n$ and $\bk \in \Zp$. Notice that, if $m = 1$, then $x_{\bk} = 0$ for all $\bk \in \Zp\setminus\{0\}$ and $x_{\bm{0}} = 1$. Thus
\[
\Phi_T(\z) \Phi_T(\w)^* = X(\z) X(\w)^* + X(\z) Y(\w)^* + Y(\z)
X(\w)^* + Y(\z) Y(\w)^*,
\]
for all $\z, \w \in \mathbb{B}^n$. On the other hand, since $\begin{bmatrix} T^* & B \cr \cmt  & D \cr
\end{bmatrix}$ is a co-isometry (see Theorem \ref{thm-ch triple}), we have
\begin{equation}\label{eq-matrix}
\begin{bmatrix}
T^* T + B B^* & T^* \cmt^* + B D^* \cr \cmt T + D B^* & \cmt \cmt^*
+ D D^*
\end{bmatrix} = \begin{bmatrix}
I_{\clh^n} & 0 \cr 0 & I_{\lnt}
\end{bmatrix}.
\end{equation}
Let $\z, \w \in \mathbb{B}^n$. We note that
\[
\begin{split}
X(\z) X(\w)^* & = \sum_{\bk, \bl \in \Zp} x_{\bk} x_{\bl} D_{\bk}
D_{\bl}^* z^{\bk} \bar{w}^{\bl}
\\
& = \sum_{\bk \in \Zp} x_{\bk}^2 D_{\bk} D_{\bk}^* z^{\bk}
\bar{w}^{\bk} + \sum_{\bk \neq \bl} x_{\bk} x_{\bl} D_{\bk}
D_{\bl}^* z^{\bk} \bar{w}^{\bl}.
\end{split}
\]
By \eqref{eq-matrix}, we have $\cmt \cmt^* + DD^* = I_{\lnt}$, which
implies
\[
x_{\bk}^2 \dmt T^{*\bk}T^{\bk}\dmt + D_{\bk} D^*_{\bk} =
I_{\cld_{m,T^*}},
\]
for all $\bk \in \Zp$, and
\[
x_{\bk} x_{\bl} \dmt T^{*\bk}T^{\bl}\dmt + D_{\bk} D^*_{\bl} =0,
\]
for all $\bk \neq \bl$ in $\Zp$. This implies that
\[
\sum_{\bk \in \Zp} x_{\bk}^2 D_{\bk} D_{\bk}^* z^{\bk} \bar{w}^{\bk}
= \sum_{\bk \in \Zp} x_{\bk}^2 (I_{\cld_{m,T^*}} - x_{\bk}^2 \dmt
T^{*\bk}T^{\bk}\dmt) z^{\bk} \bar{w}^{\bk},
\]
and
\[
\sum_{\bk \neq \bl} x_{\bk} x_{\bl} D_{\bk} D_{\bl}^* z^{\bk}
\bar{w}^{\bl} = - \sum_{\bk \neq \bl} x_{\bk}^2 x_{\bl}^2 \dmt
T^{*\bk}T^{\bl}\dmt z^{\bk} \bar{w}^{\bl}.
\]
Hence
\[
\begin{split}
X(\z) X(\w)^* & = \sum_{\bk \in \Zp} x_{\bk}^2 z^{\bk} \bar{w}^{\bk}
I_{\cld_{m,T^*}} - \sum_{\bk, \bl \in \Zp} x_{\bk}^2 x_{\bl}^2 \dmt
T^{*\bk}T^{\bl}\dmt z^{\bk} \bar{w}^{\bl}
\\
& = K_{m-1}(\z, \w) I_{\dmt} - \dmt \Big(\sum_{\bk \in \Zp}
x_{\bk}^2 z^{\bk} T^{*\bk}\Big) \Big( \sum_{\bl \in \Zp} x_{\bl}^2
\bar{w}^{\bl} T^{\bl} \Big)\dmt
\\
& = K_{m-1}(\z, \w) I_{\dmt} - \dmt (I - ZT^*)^{-(m-1)} (I - T
W^*)^{-(m-1)}\dmt.
\end{split}
\]
Here
\[
K_0(\z, \w) \equiv 1 \quad \quad (\z, \w \in \mathbb{B}^n).
\]
Now we compute
\[
\begin{split}
X(\z) Y(\w)^* & = \Big(\sum_{\bk \in \Zp} x_{\bk} D_{\bk} z^{\bk}\Big) \Big(B^* W^* (I- T W^*)^{-m}\dmt \Big)
\\
& = \sum_{\bk \in \Zp} x_{\bk} z^{\bk} \Big(D_{\bk} B^* \Big) W^* (I - T
W^*)^{-m}\dmt.
\end{split}
\]
By \eqref{eq-matrix}, we have $\cmt T + D B^* = 0$, that is
\[
x_{\bk} \dmt T^{*\bk} T + D_{\bk} B^* = 0 \quad \quad (\bk \in \Zp),
\]
and so
\[
X(\z) Y(\w)^* = - \dmt \Big(\sum_{\bk \in \Zp} x_{\bk}^2 z^{\bk}
T^{*\bk}\Big) T W^* (I- T W^*)^{-m}\dmt,
\]
that is
\[
X(\z) Y(\w)^* = - \dmt (I - ZT^*)^{-(m-1)} T W^* (1- T
W^*)^{-m}\dmt.
\]
By duality
\[
Y(\z) X(\w)^* = - \dmt (I- Z T^*)^{-m} Z T^* (I - T W^*)^{-(m-1)}
\dmt.
\]
Finally, again by \eqref{eq-matrix}, we have $T^* T + BB^* =
I_{\clh^n}$, and so
\[
\begin{split}
Y(\z) Y(\w)^* & = \dmt (I-ZT^*)^{-m} ZB B^* W^* (I - T W^*)^{-m}
\dmt
\\
& = \dmt (I-ZT^*)^{-m} Z(I_{\clh} - T^* T) W^* (I - T W^*)^{-m}
\dmt.
\end{split}
\]
Therefore
\[
\begin{split}
\Phi_T(\z) \Phi_T(\w)^* & = K_{m-1}(\z, \w) I_{\dmt} - \dmt (I -
ZT^*)^{-(m-1)} (I - T W^*)^{-(m-1)}\dmt
\\
& \quad \; - \dmt (I - ZT^*)^{-(m-1)} T W^* (I - T W^*)^{-m}\dmt
\\
& \quad \; - \dmt (I- Z T^*)^{-m} Z T^* (I - W T^*)^{-(m-1)} \dmt
\\
& \quad \; + \dmt (I - ZT^*)^{-m} Z(I - T^* T) W^* (I - T
W^*)^{-m} \dmt
\\
& = K_{m-1}(\z, \w) I_{\dmt} - \dmt (I - ZT^*)^{-m} M (I - T
W^*)^{-m}\dmt,
\end{split}
\]
where
\[
M = (I - ZT^*)(I - TW^*) + (I- ZT^*) TW^* + ZT^* (I - WT^*) - Z (I -
T^*T) W^*.
\]
This is now simplified to $M = I - Z W^*$, that is
\[
M = (1 - \langle \z, \w \rangle)I,
\]
and so
\[
\Phi_T(\z) \Phi_T(\w)^* = K_{m-1}(\z, \w) I_{\dmt} - (1 - \langle
\z, \w \rangle)\dmt (I - ZT^*)^{-m} (I - T W^*)^{-m}\dmt.
\]
We obtain
\begin{equation}\label{charid}
\begin{split}
\frac{1}{(1-\langle \z, \w \rangle)^m} I_{\cld_{m,T^*}} -
\frac{\Phi_T(\z) \Phi_T(\w)^*}{1-\langle \z, \w \rangle} =
\dmt(I - ZT^*)^{-m}(I - TW^*)^{-m}\dmt,
\end{split}
\end{equation}
which shows that
\[
(\z, \w) \in \mathbb{B}^n \times \mathbb{B}^n \mapsto \frac{1}{(1-\langle \z, \w
\rangle)^m}I_{\cld_{m,T^*}} - \frac{\Phi_T(\z)
\Phi_T(\w)^*}{1-\langle \z, \w \rangle},
\]
is a positive definite kernel. By a well-known fact from reproducing
kernel Hilbert space theory (cf. page 2412, \cite{BV2}), it follows
that
\[
\Phi_T \in \clm(\arv(\cle), \mathbb{H}_m(\mathbb{B}^n,
\cld_{m,T^*})),
\]
and hence
\[
M_{\Phi_T}^* \Big(K_m(\cdot, \w) \eta \Big) = K_1(\cdot, \w)
\Phi_T(\w)^* \eta \quad \quad (\w \in \mathbb{B}^m, \eta \in
\cld_{m, T^*}).
\]
This shows that
\[
(I - M_{\Phi_T} M_{\Phi_T}^*) \Big(K_m(\cdot, \w) \eta \Big)(\z) =
\Big(K_m(\z, \w) I_{\cld_{m,T^*}} - K_1(\z, \w) \Phi_T(\z)
\Phi_T(\w)^*\Big) \eta,
\]
and hence by \eqref{charid}
\[
(I - M_{\Phi_T} M_{\Phi_T}^*) \Big(K_m(\cdot, \w) \eta \Big)(\z) =
\dmt(I-ZT^*)^{-m}(I-TW^*)^{-m}\dmt \eta,
\]
for all $\z,\w \in \mathbb{B}^n$ and $\eta \in \cld_{m, T^*}$. On the
other hand, by the definition of canonical dilations (see
\eqref{eq-Pim}), $\Pi_m^* : \mathbb{H}_m(\mathbb{B}^n, \cld_{m,T^*})
\raro \clh$ is given by
\[
\Pi_m^* \Big( K_m(\cdot, \w) \eta \Big) = (I_{\clh} - T W^*)^{-m} \dmt
\eta \quad \quad (\w \in \mathbb{B}^m, \eta \in \cld_{m, T^*}).
\]
This implies that
\begin{equation}\label{eq-PimPim*}
\Pi_m \Pi_m^* \Big( K_m(\cdot, \w) \eta \Big) (\z)=
\dmt(I_{\clh} - ZT^*)^{-m}(I_{\clh} - TW^*)^{-m}\dmt \eta,
\end{equation}
for all $\z, \w \in \mathbb{B}^n$ and $\eta \in \cld_{m, T^*}$, and so
\[
\Pi_m \Pi_m^* = I_{\mathbb{H}_m(\mathbb{B}^n, \cld_{m, T^*})} -
M_{\Phi_T} M_{\Phi_T}^*.
\]
In particular, $M_{\Phi_T}$ is a partial isometry and the canonical
model invariant subspace corresponding to $T$ (see \eqref{eq-SmT})
is given by
\[
\cls_{m,T} = \Phi_T \arv(\cle).
\]
We have therefore proved the following:

\begin{Theorem}\label{thm-analytic model}
Let $T$ be a pure $m$-hypercontraction on $\clh$, and let $(\cle, B, D)$ be a characteristic triple of $T$. Then
\[
\Phi_T \in \clm(\arv(\cle),
\mathbb{H}_m(\mathbb{B}^n,\cld_{m,T^*})),
\]
is a partially isometric multiplier and
\[
\cls_{m, T} = \Phi_T \arv(\cle),
\]
where
\[
\Phi_T(z) = \sum_{\bk \in \Zp} \sqrt{\rho_{m-1}(\bk)} D_{\bk}
z^{\bk} + \dmt (I_{\clh} - ZT^*)^{-m}ZB  \quad  \quad \big(\z \in
\mathbb{B}^n \big),
\]
is the characteristic function corresponding to $(\cle, B, D)$ and $\cls_{m, T}$ is the canonical model invariant subspace corresponding
to $T$.
\end{Theorem}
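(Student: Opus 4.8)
The plan is to establish both assertions — that $\Phi_T$ is a partially isometric multiplier and that its range is exactly $\cls_{m,T}$ — by producing a single kernel identity and then reading off both conclusions from it.

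First I would verify that $\Phi_T$ is a contractive multiplier by computing the Agler-type kernel $\frac{1}{(1-\la\z,\w\ra)^m}I_{\cld_{m,T^*}} - \frac{\Phi_T(\z)\Phi_T(\w)^*}{1-\la\z,\w\ra}$ and showing it is positive definite. Writing $\Phi_T = X + Y$ as in the notation above, the product $\Phi_T(\z)\Phi_T(\w)^*$ splits into four terms $X(\z)X(\w)^*$, $X(\z)Y(\w)^*$, $Y(\z)X(\w)^*$, $Y(\z)Y(\w)^*$, each of which I rewrite using the block relations coming from the fact that the characteristic triple makes $\begin{bmatrix} T^* & B \\ \cmt & D \end{bmatrix}$ a co-isometry, namely the entries of \eqref{eq-matrix}: $T^*T + BB^* = I_{\clh^n}$, $\cmt T + DB^* = 0$, and $\cmt\cmt^* + DD^* = I_{\lnt}$. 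After collecting the four pieces, all dependence on $B$ and $D$ disappears, the surviving middle factor collapses to $M = I - ZW^* = (1-\la\z,\w\ra)I$, and one arrives at the identity \eqref{charid}. Its right-hand side is manifestly of the form $AA^*$, so the kernel is positive definite, and the standard reproducing-kernel characterization of multipliers then gives $\Phi_T \in \clm(\arv(\cle), \bH_m(\mathbb{B}^n,\cld_{m,T^*}))$ together with the adjoint formula $M_{\Phi_T}^*(K_m(\cdot,\w)\eta) = K_1(\cdot,\w)\Phi_T(\w)^*\eta$.

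Next I would compute $I - M_{\Phi_T}M_{\Phi_T}^*$ on the total set $\{K_m(\cdot,\w)\eta\}$ and identify the result. Applying the adjoint formula and then \eqref{charid} yields $(I-M_{\Phi_T}M_{\Phi_T}^*)(K_m(\cdot,\w)\eta)(\z) = \dmt(I-ZT^*)^{-m}(I-TW^*)^{-m}\dmt\,\eta$. On the other hand, the explicit form of the canonical dilation \eqref{eq-Pim} gives $\Pi_m^*(K_m(\cdot,\w)\eta) = (I_\clh - TW^*)^{-m}\dmt\,\eta$, so $\Pi_m\Pi_m^*$ acts on these same kernel functions by exactly the same formula \eqref{eq-PimPim*}. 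Since the kernel functions span a dense subspace, I conclude $\Pi_m\Pi_m^* = I_{\bH_m(\mathbb{B}^n,\cld_{m,T^*})} - M_{\Phi_T}M_{\Phi_T}^*$.

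Finally, since $\Pi_m$ is an isometry, $\Pi_m\Pi_m^*$ is the orthogonal projection onto $\clq_{m,T} = \Pi_m\clh$; hence $M_{\Phi_T}M_{\Phi_T}^* = I - \Pi_m\Pi_m^*$ is also an orthogonal projection, which forces $M_{\Phi_T}$ to be a partial isometry. Its range is $(\Pi_m\clh)^\perp = \cls_{m,T}$, giving $\cls_{m,T} = \Phi_T\arv(\cle)$ as in \eqref{eq-SmT}, which completes both parts. The main obstacle is the first step: the four-term reduction of $\Phi_T(\z)\Phi_T(\w)^*$ to \eqref{charid} is an exact cancellation that invokes all three block relations simultaneously, and keeping track of the mismatched powers $(I-ZT^*)^{-(m-1)}$ versus $(I-ZT^*)^{-m}$ across the cross terms is where the bookkeeping is delicate; once \eqref{charid} is secured, both conclusions follow formally.
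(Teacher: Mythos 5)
Your proposal is correct and follows essentially the same route as the paper's own argument: the same decomposition $\Phi_T = X + Y$, the same use of the three co-isometry block relations from \eqref{eq-matrix} to collapse the four-term expansion to the identity \eqref{charid}, and the same comparison of $I - M_{\Phi_T}M_{\Phi_T}^*$ with $\Pi_m\Pi_m^*$ on the kernel functions to conclude that $M_{\Phi_T}$ is a partial isometry with range $\cls_{m,T}$. No gaps; the delicate bookkeeping you flag in the cross terms is exactly where the paper's computation also concentrates its effort.
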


Characteristic triples and functions are more explicit for $1$-hypercontractions (or row contractions). This particular case will be discussed in  Section \ref{sec 6}.

It is worth pointing out, also, that the representing multiplier
$\Phi_T$ of $\cls_{m, T}$ is unique up to a partial isometry constant right factor (cf. \cite[Theorem 6.5]{MB}):
If
\[
\cls_{m,T} = \tilde{\Phi} \arv(\tilde{\cle}),
\]
for some Hilbert space $\tilde{\cle}$ and partially isometric
multiplier $\tilde{\Phi} \in \clm(\arv(\tilde{\cle}),
\bH_m(\mathbb{B}^n, \cld_{m,T^*}))$, then there exists a partial
isometry $V \in \clb(\tilde{\cle}, \cle)$ such that
\[
\tilde{\Phi}(\z) = \Phi_T(\z) V \quad \quad (\z \in \mathbb{B}^n).
\]

We now proceed to prove complete unitary invariance of
characteristic triples of pure $m$-hypercontractions. Recall that
two commuting tuples $T=(T_1,\cdots,T_n)$ on $\clh$ and
$\tilde{T}=(\tilde{T}_1, \ldots, \tilde{T}_n)$ on $\tilde{\clh}$ are
said to be \textit{unitarily equivalent} if there exists a unitary $U
\in \clb(\clh, \tilde{\clh})$ such that $U T_i = \tilde{T}_i U$ for
all $i=1,\ldots,n$.

Let $T$ and $\Tt$ be pure $m$-hypercontractions on $\clh$ and
$\tilde{\clh}$, respectively. Let $\Phi_T$ and $\Phi_{\Tt}$ be characteristic
functions corresponding to characteristic triples $(\cle, B, D)$ and $(\tilde{\cle}, \tilde{B}, \tilde{D})$ of $T$ and $\Tt$,
respectively. The characteristic functions $\Phi_T$ and $\Phi_{\Tt}$
are said to \textit{coincide} if
\[
 \Phi_{\Tt}(\z) = \tau_* \Phi_T(\z) \tau \quad \quad (\z \in \mathbb{B}^n),
\]
for some unitary operators $\tau : \tilde{\cle} \raro \cle$ and
$\tau_* : \cld_{m,T^*} \to \cld_{m,\Tt^*}$. Characteristic triples
of pure $m$-hypercontractions are complete unitary invariants:

\begin{Theorem}\label{thm-cui}
Let $T$ and $\Tt$ be pure $m$-hypercontractions on $\clh$ and
$\tilde{\clh}$, respectively. Then $T$ and $\Tt$ are unitarily
equivalent if and only if characteristic functions of $T$ and $\Tt$ coincide.
\end{Theorem}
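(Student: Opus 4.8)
For the easy direction, I would assume that the characteristic functions coincide and construct a unitary intertwining $T$ and $\Tt$. Since $\Phi_{\Tt}(\z) = \tau_* \Phi_T(\z) \tau$ for unitaries $\tau : \tilde{\cle} \to \cle$ and $\tau_* : \cld_{m,T^*} \to \cld_{m,\Tt^*}$, the induced multiplication operators satisfy an analogous relation, and hence $\tau_*$ lifts to a unitary $\Gamma : \mathbb{H}_m(\mathbb{B}^n, \cld_{m,T^*}) \to \mathbb{H}_m(\mathbb{B}^n, \cld_{m,\Tt^*})$ (namely $I_{\bH_m} \otimes \tau_*$) that intertwines the respective shift tuples and carries $\Phi_T \arv(\cle)$ onto $\Phi_{\Tt} \arv(\tilde{\cle})$. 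By Theorem \ref{thm-analytic model} these ranges are precisely the canonical model invariant subspaces $\cls_{m,T}$ and $\cls_{m,\Tt}$, so $\Gamma$ restricts to a unitary between the canonical model spaces $\clq_{m,T} = \mathbb{H}_m(\mathbb{B}^n, \cld_{m,T^*}) \ominus \cls_{m,T}$ and $\clq_{m,\Tt}$ that intertwines the compressed shifts. Since these compressed shifts are unitarily equivalent to $T$ and $\Tt$ respectively (by the model theory recalled in Section \ref{sec-prel}), we obtain the desired unitary equivalence of $T$ and $\Tt$.

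**For the converse, the main work lies in showing that unitary equivalence forces the characteristic functions to coincide.** I would start from a unitary $U \in \clb(\clh, \tilde{\clh})$ with $U T_i = \tilde{T}_i U$ for all $i$. The first step is to verify that $U$ intertwines the defect operators: since $\sigma_T$ and $\sigma_{\Tt}$ are defined through the $T_i$ and $\tilde T_i$, one checks that $U (I_{\clb(\clh)} - \sigma_T)^m(I_\clh) U^* = (I_{\clb(\tilde\clh)} - \sigma_{\Tt})^m(I_{\tilde\clh})$, hence $U D_{m,T^*}^2 U^* = D_{m,\Tt^*}^2$ and $U D_{m,T^*} U^* = D_{m,\Tt^*}$. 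This yields a natural unitary $\tau_* : \cld_{m,T^*} \to \cld_{m,\Tt^*}$ obtained by restricting $U$ (composed with $D_{m,T^*}$, $D_{m,\Tt^*}$) to the defect spaces, namely $\tau_*(D_{m,T^*} h) = D_{m,\Tt^*} U h$.

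**Next I would track how this intertwining propagates through the whole construction.** The operator $\tau_*$ induces, via $\z^{\bk} \otimes \tau_*$, a unitary between the defect-valued Bergman spaces, and similarly between the column spaces $\lnt$ and $l^2(\Zp, \cld_{m,\Tt^*})$; call the latter $\widetilde{\tau_*}$. Using $UT_i^* U^* = \tilde T_i^*$ together with $U D_{m,T^*} = D_{m,\Tt^*} U$ (equivalently the defect intertwining above), a direct computation shows $\widetilde{\tau_*}\, \cmt\, U^* = C_{m,\Tt}$ and $\widetilde{\tau_*}\, U T^* U^* = \Tt^*$ at the level of the isometries $X_T$ and $X_{\Tt}$; that is, the diagram relating $X_T$ and $X_{\Tt}$ commutes through the unitaries $U$, $U \oplus \cdots \oplus U$, and $\widetilde{\tau_*}$. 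Consequently the ranges of $X_T$ and $X_{\Tt}$ correspond under these unitaries, so their orthogonal complements (which carry the characteristic triples) correspond as well. By the uniqueness of characteristic triples (Theorem \ref{Unique:triple}) one then produces a unitary $\tau : \tilde{\cle} \to \cle$ relating the triples $(\cle, B, D)$ and $(\tilde\cle, \tilde B, \tilde D)$ through $U$ and $\tau_*$.

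**The final step, and the one requiring the most bookkeeping, is to substitute these intertwining relations into the explicit formula \eqref{char fn} for the characteristic function.** Writing $Z = (z_1 I_\clh, \ldots, z_n I_\clh)$ and $\tilde Z = (z_1 I_{\tilde\clh}, \ldots, z_n I_{\tilde\clh})$, the relation $U T^* U^* = \Tt^*$ gives $U(I_\clh - ZT^*)^{-m} U^* = (I_{\tilde\clh} - \tilde Z \Tt^*)^{-m}$, and combining this with the defect intertwining and the triple relations $\tilde B U' = U B \tau$, $\tilde D_{\bk} = \tau_* D_{\bk} \tau$ (for the appropriate lifting $U'$ of $U$ to $\clh^n$) one verifies termwise that both the power-series part $\sum_{\bk} \sqrt{\rho_{m-1}(\bk)} D_{\bk} z^{\bk}$ and the transfer-function part $D_{m,T^*}(I_\clh - ZT^*)^{-m} ZB$ transform correctly, yielding $\Phi_{\Tt}(\z) = \tau_* \Phi_T(\z) \tau$ for all $\z \in \mathbb{B}^n$. \textbf{The chief obstacle} is managing the consistency of the three distinct unitaries ($U$ on $\clh$, its ampliation on $\clh^n$, and $\tau_*$ on the defect space) across both summands simultaneously; the cleanest route is to phase the argument entirely through Theorem \ref{Unique:triple} so that the explicit substitution is reduced to inserting already-established intertwining identities rather than recomputing the products $\Phi_T \Phi_{\Tt}^*$ from scratch.
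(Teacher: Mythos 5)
Your proposal is correct and takes essentially the same route as the paper: in the forward direction you intertwine the defect operators, ampliate the unitary, recognize a new characteristic triple of $\Tt$, and invoke the uniqueness theorem (Theorem \ref{Unique:triple}) to obtain $\tau$ before substituting into the formula for $\Phi_T$, exactly as the paper does; and in the converse you lift $\tau_*$ to $I_{\bH_m}\otimes\tau_*$, match the ranges of the characteristic multipliers (hence the model spaces), and transfer the compressed shifts, which is the paper's argument phrased through ranges rather than through the projection identity $\Pi_m\Pi_m^* = I - M_{\Phi_T}M_{\Phi_T}^*$. The only blemish is the garbled intertwining ``$\widetilde{\tau_*}\,UT^*U^* = \Tt^*$,'' which should involve only the ampliation $U \oplus \cdots \oplus U$, as your surrounding text makes clear.
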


\NI \textsf{Proof.}
Let $\Phi_T$ and $\Phi_{\Tt}$ be characteristic
functions corresponding to characteristic triples $(\cle, B, D)$ and $(\tilde{\cle}, \tilde{B}, \tilde{D})$ of $T$ and $\Tt$,
respectively.
Then
\[
U = \begin{bmatrix} X_T & Y_T \end{bmatrix}
\in \clb(\clh \oplus \cle, \clh^n \oplus \lnt),
\]
and
\[
\tilde{U} =
\begin{bmatrix} X_{\Tt} & Y_{\Tt} \end{bmatrix} \in \clb(\tilde{\clh}
\oplus \tilde{\cle}, \tilde{\clh}^n \oplus l^2(\mathbb{Z}^n_+,
\cld_{m, \Tt^*}),
\] are unitaries corresponding to characteristic triples $(\cle,B,D)$ and $(\tilde{\cle}, \tilde{B}, \tilde{D})$,
respectively, as in Theorem \ref{thm-ch triple}.

To prove the forward implication, let $W : \clh \raro
\tilde{\clh}$ be a unitary such that $W T_i = \Tt_i W$,
$i=1,\ldots,n$. Then $W D_{m, T^*}  = D_{m, \Tt^*} W$, and so
\[
C_{m, \Tt} W = (I \otimes W|_{\cld_{m, T^*}}) C_{m, T}.
\]
Also we have unitaries
\[
 W_n := W \oplus \cdots \oplus W : \clh^n \raro \tilde{\clh}^n,
\]
and
\[
 \hat{W} := \begin{bmatrix}
W_n & 0 \\ 0 & I \otimes W|_{\cld_{m, T^*}}
\end{bmatrix} : \clh^n \oplus \lnt \raro \tilde{\clh}^n \oplus l^2(\mathbb{Z}^n_+, \cld_{m,
\Tt^*}),
\]
which gives
\[
W_n T^* = \Tt^* W \quad \mbox{and} \quad \hat{W} X_T = X_{\Tt} W.
\]
Hence
\[
\begin{bmatrix} X_{\Tt} & \hat{W} Y_T \end{bmatrix} = \hat{W} \begin{bmatrix} X_T & Y_T \end{bmatrix} \begin{bmatrix}W^* & 0 \\ 0 & I_{\cle}
\end{bmatrix}.
\]
In particular
\[
\begin{bmatrix} X_{\Tt} & \hat{W} Y_T \end{bmatrix}: \tilde{\clh} \oplus \cle \raro \tilde{\clh}^n \oplus l^2(\mathbb{Z}^n_+, \cld_{m, \Tt^*}),
\]
is a unitary and
\[
\hat{W} Y_T = \begin{bmatrix} W_n B \\ (I \otimes W) D \end{bmatrix}
\in \clb(\cle, \tilde{\clh}^n \oplus l^2(\mathbb{Z}^n_+, \cld_{m,
\Tt^*})),
\]
is an isometry. Thus, $(\cle, W_n B, (I \otimes W) D)$ is a
characteristic triple of $\Tt$ and hence, by Theorem ~\ref{Unique:triple}, there exists a unitary $V : {\cle} \raro
\tilde{\cle}$ such that $\hat{W} Y_T = Y_{\tilde{T}} V$. This shows
that
\[
Y_{\Tt} = \hat{W} Y_T V^*,
\]
that is
\[
Y_{\Tt} = \begin{bmatrix} W_n B V^* \\ (I \otimes W) D V^* \end{bmatrix},
\]
and so
\[
\tilde{U} = \begin{bmatrix} \Tt^* & W_n B V^* \\ C_{m, \Tt} & (I \otimes W) D V^* \end{bmatrix}.
\]
A routine computation then shows that
\[
\Phi_{\Tt}(\z)= W \Phi_T(\z) V^* \quad \quad (\z \in \mathbb{B}^n).
\]
In order to prove sufficiency, we let $\Phi_{\Tt}(\z) = \tau_*
\Phi_T(\z) \tau^*$ for all $\z \in \mathbb{B}^n$ for some unitaries
$\tau \in \clb(\cle, \tilde{\cle})$ and $\tau_* \in \clb(\cld_{m,
T^*}, \cld_{m, \Tt^*})$. Then
\[
M_{\Phi_{\Tt}} = (I_{\mathbb{H}_m} \otimes \tau_*) M_{\Phi_T}
(I_{\arv} \otimes \tau^*),
\]
and so
\[
(I_{\mathbb{H}_m} \otimes \tau_*^*)(I_{\mathbb{H}_m(\mathbb{B}^n,
\cld_{m,{\Tt}^*})} - M_{\Phi_{\Tt}} M_{\Phi_{\Tt}}^*) =
(I_{\mathbb{H}_m(\mathbb{B}^n, \cld_{m,{T}^*})} - M_{\Phi_T}
M_{\Phi_T}^*) (I_{\bH_m} \otimes \tau^*_*),
\]
that is
\[
(I_{\mathbb{H}_m} \otimes \tau_*^*) P_{\clq_{\Tt}} = P_{\clq_{T}}
(I_{\bH_m} \otimes \tau^*_*).
\]
It follows that
\[
 (I_{\bH_m} \otimes \tau^*_*) \clq_{\Tt}=\clq_T.
\]
Moreover
\[
\begin{split}
(I_{\mathbb{H}_m} \otimes \tau_*^*) \Big( P_{\clq_{\Tt}} M_{z_i} P_{\clq_{\Tt}} \Big) & = (I_{\mathbb{H}_m} \otimes \tau_*^*) P_{\clq_{\Tt}} M_{z_i}P_{\clq_{\Tt}}
\\
& = P_{\clq_{T}} (I_{\bH_m} \otimes \tau^*_*) M_{z_i}P_{\clq_{\Tt}}
\\
& = P_{\clq_{T}} M_{z_i} (I_{\bH_m} \otimes \tau^*_*)P_{\clq_{\Tt}},
\end{split}
\]
that is
\[
(I_{\mathbb{H}_m} \otimes \tau_*^*) \Big( P_{\clq_{\Tt}} M_{z_i} P_{\clq_{\Tt}} \Big)= \Big(P_{\clq_{T}} M_{z_i} P_{\clq_{T}} \Big) (I_{\bH_m} \otimes \tau^*_*),
\]
for all $i = 1, \ldots, n$. Combining with the previous equality, we conclude that
\[
P_{\clq_T} M_z|_{\clq_T} \cong P_{\clq_{\Tt}}M_z|_{\clq_{\Tt}},
\]
that is, $T\cong \Tt$. \qed

We now proceed to study joint invariant subspaces
of pure $m$-hypercontractions. Following Sz.-Nagy-Foias
factorizations of characteristic functions, we relate joint
invariant subspaces of pure $m$-hypercontractions with
operator-valued factors of characteristic functions corresponding to
characteristic triples. We make good use of the following fact (see Lemma 2, \cite{AT}):

\begin{Lemma}\label{AgMc}
Let $\cle$, $\cle_*$ and $\clf$ be Hilbert spaces, and let $\Phi$
and $\Psi$ be $\clb(\cle, \cle_*)$ and $\clb(\clf, \cle_*)$ valued
analytic functions, respectively, on $\mathbb{B}^n$. Then the
following are equivalent:
\begin{enumerate}
\item[\textup{(i)}]
$(\z, \w) \mapsto \frac{\Psi(\z)\Psi(\w)^* - \Phi(\z)\Phi(\w)^*}{1 - \langle \z, \w \rangle}$
is a positive-definite kernel on $\mathbb{B}^n$.
\item[\textup{(ii)}]
There exists a contractive multiplier $\Theta \in
\clm(\arv(\cle), \arv(\clf))$ such that
\[
\Phi(z) = \Psi(\z) \Theta(\z) \quad \quad (\z \in \mathbb{B}^n).
\]
\end{enumerate}
\end{Lemma}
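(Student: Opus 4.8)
The plan is to prove the two implications separately: $(\mathrm{ii})\Rightarrow(\mathrm{i})$ is a one-line congruence of kernels, while $(\mathrm{i})\Rightarrow(\mathrm{ii})$ is a lurking-isometry construction feeding into the transfer-function realization of contractive multipliers recalled in the introduction (from \cite{AT}). For $(\mathrm{ii})\Rightarrow(\mathrm{i})$, suppose $\Theta\in\clm(\arv(\cle),\arv(\clf))$ is contractive with $\Phi(\z)=\Psi(\z)\Theta(\z)$. Since $M_\Theta$ is a contraction, $I-M_\Theta M_\Theta^*\ge 0$, and evaluating this on the reproducing kernels of $\arv(\clf)$ shows, exactly as in the computation leading to \eqref{charid}, that $(\z,\w)\mapsto \bigl(I_\clf-\Theta(\z)\Theta(\w)^*\bigr)/(1-\langle\z,\w\rangle)$ is positive definite. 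Then
\[
\frac{\Psi(\z)\Psi(\w)^*-\Phi(\z)\Phi(\w)^*}{1-\langle\z,\w\rangle}=\Psi(\z)\,\frac{I_\clf-\Theta(\z)\Theta(\w)^*}{1-\langle\z,\w\rangle}\,\Psi(\w)^*,
\]
and a congruence $\Psi(\z)K(\z,\w)\Psi(\w)^*$ of a positive-definite kernel is positive definite, which gives $(\mathrm{i})$.

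For the converse I would first take a Kolmogorov decomposition of the kernel in $(\mathrm{i})$: there are a Hilbert space $\clg$ and a map $H:\mathbb{B}^n\to\clb(\clg,\cle_*)$ with $\bigl(\Psi(\z)\Psi(\w)^*-\Phi(\z)\Phi(\w)^*\bigr)/(1-\langle\z,\w\rangle)=H(\z)H(\w)^*$. Clearing denominators and isolating $\langle\z,\w\rangle=\sum_{i=1}^n z_i\bar w_i$ rewrites this as
\[
\Psi(\z)\Psi(\w)^*+\sum_{i=1}^n z_i\bar w_i\,H(\z)H(\w)^*=\Phi(\z)\Phi(\w)^*+H(\z)H(\w)^*,
\]
that is, $P(\z)P(\w)^*=Q(\z)Q(\w)^*$ for all $\z,\w\in\mathbb{B}^n$, where $P(\z)=\bigl[\,\Psi(\z)\ \ z_1H(\z)\ \cdots\ z_nH(\z)\,\bigr]:\clf\oplus\clg^n\to\cle_*$ and $Q(\z)=\bigl[\,\Phi(\z)\ \ H(\z)\,\bigr]:\cle\oplus\clg\to\cle_*$. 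Consequently $P(\w)^*\eta\mapsto Q(\w)^*\eta$ is a well-defined isometry between the closed linear spans of these two families of vectors; extending it (after padding $\clg$ with an auxiliary summand, or by dilating a contractive extension) to a unitary
\[
U=\begin{bmatrix}\alpha&\beta\\\gamma&\delta\end{bmatrix}:\clf\oplus\clg^n\longrightarrow\cle\oplus\clg,
\]
the relation $UP(\w)^*=Q(\w)^*$ splits into the two operator identities
\[
\alpha\Psi(\w)^*+\beta Z(\w)^*H(\w)^*=\Phi(\w)^*,\qquad \gamma\Psi(\w)^*+\delta Z(\w)^*H(\w)^*=H(\w)^*,
\]
where $Z(\z)=(z_1I_\clg,\ldots,z_nI_\clg):\clg^n\to\clg$ is the row operator of the realization theorem, so $Z(\w)^*:\clg\to\clg^n$. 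Since $\|\delta Z(\w)^*\|\le\|\w\|<1$, the operator $I-\delta Z(\w)^*$ is invertible on $\clg$; the second identity then gives $H(\w)^*=(I-\delta Z(\w)^*)^{-1}\gamma\Psi(\w)^*$, and substituting into the first and taking adjoints yields $\Phi(\w)=\Psi(\w)\Theta(\w)$ with
\[
\Theta(\z)=\alpha^*+\gamma^*\bigl(I-Z(\z)\delta^*\bigr)^{-1}Z(\z)\beta^*\qquad(\z\in\mathbb{B}^n).
\]
This $\Theta$ is precisely the transfer function of the block-permuted unitary colligation $\bigl[\begin{smallmatrix}\delta^*&\beta^*\\\gamma^*&\alpha^*\end{smallmatrix}\bigr]$ obtained from $U^*$, so by the realization theorem of \cite{AT} it is a contractive multiplier in $\clm(\arv(\cle),\arv(\clf))$, establishing $(\mathrm{ii})$.

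I expect the lurking-isometry step to be the crux: recognizing the cleared-denominator identity as the equality of Gram operators $P(\z)P(\w)^*=Q(\z)Q(\w)^*$ is what legitimizes defining $U$, and the subsequent bookkeeping of adjoints and block orderings must be arranged so that the resulting $\Theta$ both matches the cited realization and delivers the factorization $\Phi=\Psi\Theta$. The two remaining technical points---extending the partial isometry to an honest unitary without disturbing the identities used above, and the invertibility of $I-\delta Z(\z)^*$ throughout $\mathbb{B}^n$ (which follows from $\|\z\|<1$ together with $\|\delta\|\le1$)---are routine.
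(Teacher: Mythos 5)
Your proof is correct; note that the paper does not prove this lemma at all but quotes it verbatim as Lemma 2 of \cite{AT}, and your argument---(ii)$\Rightarrow$(i) by positivity of $\bigl(I_{\clf}-\Theta(\z)\Theta(\w)^*\bigr)/(1-\langle\z,\w\rangle)$ plus kernel congruence, and (i)$\Rightarrow$(ii) by Kolmogorov factorization, the lurking-isometry identity $P(\z)P(\w)^*=Q(\z)Q(\w)^*$, and the transfer-function realization---is precisely the standard argument underlying that citation. The steps you flag as routine (well-definedness of the isometry from equality of Gram kernels, padding $\clg$ so the defect spaces match and the extension to a unitary preserves $UP(\w)^*=Q(\w)^*$, and invertibility of $I-\delta Z(\w)^*$ from $\|\delta\|\le 1$, $\|Z(\w)^*\|=\|\w\|<1$) are indeed routine, and your block bookkeeping correctly yields the unitary colligation $\bigl[\begin{smallmatrix}\delta^* & \beta^*\\ \gamma^* & \alpha^*\end{smallmatrix}\bigr]:\clg\oplus\cle\to\clg^n\oplus\clf$ whose transfer function is $\Theta$, exactly matching the realization recalled in the paper's introduction.
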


We are now ready for a factorization theorem for joint invariant
subspaces of pure $m$-hypercontractions.

\begin{Theorem}\label{thm-joint inv sub}
Let $T$ be a pure $m$-hypercontraction on $\clh$, and let
$(\cle,B,D)$ be a characteristic triple of $T$. If $\Phi_T$ is the
characteristic function corresponding to $(\cle,B,D)$, then $T$ has
a closed joint invariant subspace if and only if there exist a
Hilbert space $\clf$, a contractive multiplier $\Phi_1 \in
\clm(\arv(\cle), \arv(\clf))$, and a partially isometric
multiplier $\Phi_2 \in \clm(\arv(\clf),
\mathbb{H}_m(\mathbb{B}^n,\cld_{m,T^*}))$ such that
\[
\Phi_T(\z) = \Phi_2(\z) \Phi_1(\z) \quad \quad (\z \in \mathbb{B}^n).
\]
Moreover, the joint-invariant subspace is non-trivial if and only if
$\mbox{ran}M_{\Phi_2}$ is neither equal to $\mbox{ran}M_{\Phi_T}$
nor to $\mathbb{H}_m(\mathbb{B}^n, \cld_{m,T^*})$.
\end{Theorem}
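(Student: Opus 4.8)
The plan is to reduce everything to the canonical model and to set up a lattice isomorphism between the joint invariant subspaces of $T$ and the $M_z$-invariant subspaces of $\bH_m(\mathbb{B}^n,\cld_{m,T^*})$ that lie between $\cls_{m,T}$ and the whole space. Recall from Theorem~\ref{thm-analytic model} and the discussion preceding \eqref{eq-SmT} that $\Pi_m:\clh\raro\bH_m(\mathbb{B}^n,\cld_{m,T^*})$ is an isometry with range $\clq_{m,T}$, that $\cls_{m,T}=\Phi_T\arv(\cle)=\mbox{ran}\,M_{\Phi_T}$ is its orthogonal complement, and that $\Pi_m T_i=S_i\Pi_m$ with $S_i=P_{\clq_{m,T}}M_{z_i}|_{\clq_{m,T}}$. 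Thus $\Pi_m$ implements a unitary equivalence of $T$ with the compression tuple $S=(S_1,\ldots,S_n)$, and $\cln\mapsto\Pi_m\cln$ is a bijection from the joint invariant subspaces of $T$ onto those of $S$.

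First I would establish the key lattice isomorphism. Given a joint invariant subspace $\cln\subseteq\clh$ of $T$, set $\clr=\cls_{m,T}\oplus\Pi_m\cln$. I claim $\clr$ is $M_z$-invariant: for $f\in\cls_{m,T}$ this is clear since $\cls_{m,T}$ is $M_z$-invariant, while for $g\in\Pi_m\cln$ one writes $M_{z_i}g=P_{\cls_{m,T}}M_{z_i}g+S_i g$, where the first term lies in $\cls_{m,T}$ and the second in $\Pi_m\cln$ by the $S$-invariance of $\Pi_m\cln$. Conversely, any $M_z$-invariant $\clr$ with $\cls_{m,T}\subseteq\clr$ produces an $S$-invariant subspace $\clr\ominus\cls_{m,T}=\clr\cap\clq_{m,T}$ by the symmetric splitting argument. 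Hence joint invariant subspaces of $T$ correspond exactly to $M_z$-invariant subspaces $\clr$ with $\cls_{m,T}\subseteq\clr\subseteq\bH_m(\mathbb{B}^n,\cld_{m,T^*})$, the trivial subspaces $\{0\}$ and $\clh$ corresponding to $\clr=\cls_{m,T}$ and $\clr=\bH_m(\mathbb{B}^n,\cld_{m,T^*})$.

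Next I would convert such intermediate subspaces into factorizations. By the Beurling--Lax--Halmos theorem \cite[Theorem 4.4]{JS2} (as in \eqref{eq-SmT}), any $M_z$-invariant $\clr$ equals $\Phi_2\arv(\clf)=\mbox{ran}\,M_{\Phi_2}$ for some Hilbert space $\clf$ and partially isometric multiplier $\Phi_2\in\clm(\arv(\clf),\bH_m(\mathbb{B}^n,\cld_{m,T^*}))$. Since $M_{\Phi_T}$ and $M_{\Phi_2}$ are partial isometries, $M_{\Phi_T}M_{\Phi_T}^*=P_{\cls_{m,T}}$ and $M_{\Phi_2}M_{\Phi_2}^*=P_{\clr}$, so $\cls_{m,T}\subseteq\clr$ is equivalent to $M_{\Phi_T}M_{\Phi_T}^*\leq M_{\Phi_2}M_{\Phi_2}^*$. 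Testing this inequality against the kernel vectors $K_m(\cdot,\w)\eta$ and using the adjoint formula $M_{\Phi}^*(K_m(\cdot,\w)\eta)=K_1(\cdot,\w)\Phi(\w)^*\eta$ (exactly as computed for $\Phi_T$ before Theorem~\ref{thm-analytic model}), this is in turn equivalent to positive-definiteness of the kernel $\frac{\Phi_2(\z)\Phi_2(\w)^*-\Phi_T(\z)\Phi_T(\w)^*}{1-\langle\z,\w\rangle}$, which is precisely condition (i) of Lemma~\ref{AgMc} with $\Psi=\Phi_2$ and $\Phi=\Phi_T$. The lemma then yields a contractive multiplier $\Phi_1\in\clm(\arv(\cle),\arv(\clf))$ with $\Phi_T(\z)=\Phi_2(\z)\Phi_1(\z)$. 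Each step is reversible: a factorization $\Phi_T=\Phi_2\Phi_1$ gives $\mbox{ran}\,M_{\Phi_T}\subseteq\mbox{ran}\,M_{\Phi_2}=:\clr$, and the lattice isomorphism returns a joint invariant subspace of $T$, which settles both implications of the ``if and only if''.

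Finally, for the ``moreover'' clause, the lattice isomorphism makes $\cln$ trivial precisely when $\clr=\cls_{m,T}$ or $\clr=\bH_m(\mathbb{B}^n,\cld_{m,T^*})$; rewriting ranges, this is $\mbox{ran}\,M_{\Phi_2}=\mbox{ran}\,M_{\Phi_T}$ or $\mbox{ran}\,M_{\Phi_2}=\bH_m(\mathbb{B}^n,\cld_{m,T^*})$, giving the stated non-triviality criterion. I expect the main obstacle to be the careful passage from the range-containment $\cls_{m,T}\subseteq\clr$ to the kernel-positivity hypothesis of Lemma~\ref{AgMc}, where the partial-isometry identities $M_{\Phi}M_{\Phi}^*=P_{\mbox{\scriptsize ran}\,M_\Phi}$ and the reproducing-kernel computation must be combined correctly; the verification that $\clr=\cls_{m,T}\oplus\Pi_m\cln$ is $M_z$-invariant (and its converse) is routine but is the structural heart of the argument.
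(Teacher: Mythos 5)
Your proposal is correct and follows essentially the same route as the paper's proof: both pass to the model space via $\Pi_m$, invoke the Beurling--Lax--Halmos theorem of \cite{JS2} to produce the partially isometric $\Phi_2$ with $\mbox{ran}\,M_{\Phi_2} = \cls_{m,T}\oplus\Pi_m\clh_1$, and apply Lemma~\ref{AgMc} to obtain the contractive factor $\Phi_1$, with the same converse via range containment and the same identification of the trivial cases. Your explicit lattice-isomorphism framing and the derivation of kernel positivity from $M_{\Phi_T}M_{\Phi_T}^*\leq M_{\Phi_2}M_{\Phi_2}^*$ tested on kernel vectors is only a mild repackaging of the paper's computation, which instead reads the positive definiteness off from the fact that $\frac{\Phi_2(\z)\Phi_2(\w)^*-\Phi_T(\z)\Phi_T(\w)^*}{1-\langle\z,\w\rangle}$ is the reproducing kernel of the subspace $\Pi_m\clh_1 = \Phi_2\arv(\clf)\ominus\Phi_T\arv(\cle)$.
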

\NI\textit{Proof.} Let $\clh_1$ be a closed joint $T$-invariant
subspace of $\clh$, and let $\clh_2 = \clh \ominus \clh_1$. Then
\[
\mathbb{H}_m(\mathbb{B}^n, \cld_{m,T^*}) \ominus \Pi_m \clh_2,
\]
is a joint $M_z$-invariant subspace of
$\mathbb{H}_m(\mathbb{B}^n,\cld_{m,T^*})$. By a Beurling-Lax-Halmos
type theorem for weighted Bergman spaces (see Theorem 4.4,
\cite{JS2}), there exist a Hilbert space $\clf$ and a partially
isometric multiplier $\Phi_2 \in
\clm(\arv(\clf),\mathbb{H}_m(\mathbb{B}^n,\cld_{m,T^*}))$ such
that
\begin{equation}
\label{h2}
\mathbb{H}_m(\mathbb{B}^n,\cld_{m,T^*}) \ominus \Pi_m \clh_2 =
\Phi_2 \arv(\clf).
\end{equation}
Since $\clq_T = \Pi_m \clh$ and $\Pi_m \clh =
\mathbb{H}_m(\mathbb{B}^n,\cld_{m,T^*}) \ominus
\Phi_T\arv(\cle)$, we conclude that
\begin{align}
\label{h1}
\Pi_m \clh_1 &= \Pi_m \clh \ominus \Pi_m \clh_2\nonumber
\\
& = \Big(\mathbb{H}_m(\mathbb{B}^n,\cld_{m,T^*}) \ominus \Phi_T
\arv(\cle)\Big) \ominus
\Big(\mathbb{H}_m(\mathbb{B}^n,\cld_{m,T^*}) \ominus \Phi_2
\arv(\clf)\Big)\nonumber
\\
& = \Phi_2 \arv(\clf) \ominus \Phi_T\arv(\cle),
\end{align}

and hence
\[
(\z, \w) \in \mathbb{B}^n \times \mathbb{B}^n \mapsto
\frac{\Phi_2(\z)\Phi_2(\w)^* - \Phi_T(\z)\Phi_T(\w)^*}{1- \la \z,\w
\ra},
\]
is a kernel of the reproducing kernel Hilbert space $\Pi_m
\clh_1$. By Lemma \ref{AgMc}, there is a contractive multiplier
$\Phi_1 \in \clm(\arv(\cle), \arv(\clf))$ such that $\Phi_T(\z)
= \Phi_2(\z) \Phi_1(\z)$ for all $\z \in \mathbb{B}^n$.

\NI To prove the converse, let $\clf$ be a Hilbert space, $\Phi_1
\in \clm(\arv(\cle), \arv(\clf))$ be a contractive multiplier,
$\Phi_2 \in \clm(\arv(\clf),
\mathbb{H}_m(\mathbb{B}^n,\cld_{m,T^*}))$ be a partially isometric
multiplier, and let $\Phi_T(\z) = \Phi_2(\z) \Phi_1(\z)$ for all $\z
\in \mathbb{B}^n$. We have
\[
\mbox{ran} M_{\Phi_T} \subseteq \mbox{ran} M_{\Phi_2},
\]
and hence
\[
\clq \subseteq \clq_T,
\]
where
\[
\clq = \Big(\mbox{ran} M_{\Phi_2} \Big)^\perp,
\]
is a joint $M_z^*$-invariant subspace of
$\mathbb{H}_m(\mathbb{B}^n,\cld_{m,T^*})$. It now follows that
$\clh_1 = \clh \ominus \Pi_m^* \clq$ is a joint $T$-invariant
subspace of $\clh$.

\NI For the last part, note that, by \eqref{h2}, the invariant
subspace $\clh_1$ of $T$ is the full space if and only if
$\mbox{ran} M_{\Phi_2} = \mathbb{H}_m(\mathbb{B}^n, \cld_{m,T^*})$. On the
other hand, by ~\eqref{h1}, $\clh_1=0$ if and only if $\mbox{ran}
M_{\Phi_2} = \mbox{ran} M_{\Phi_T}$. This completes the proof of the
theorem.
\qed

\newsection{Universal multipliers and wandering subspaces}\label{sec-4}

In \cite{BV2}, Ball and Bolotnikov proved the following: Given a
vector-valued weighted shift space $H^2(\beta, \cle_*)$ (see the
definition below), there exists a \textit{universal multiplier}
$\psi_{\beta}$ (depending only on $\beta$ and $\cle_*$) such that any
contractive multiplier $\theta$ from a vector-valued Hardy space
$H^2_{\cle}(\D)$ to $H^2(\beta, \cle_*)$ factors through
$\psi_{\beta}$, that is
\[
\theta(z) = \psi_\beta(z) \tilde{\theta}(z) \quad \quad (z \in \D),
\]
for some Schur multiplier $\tilde{\theta} \in \clm(H^2_{\cle}(\D),
H^2_{l^2(\cle_*)}(\D))$ (see \cite[Theorem 2.1]{BV2} for more details).

In this section, we generalize the above to several
variables multipliers. We also define ``inner functions'' and
examine the uniqueness of universal factorizations in several
variables. First, we fix some notation and terminology.

A strictly decreasing sequence of positive numbers $\beta =
\{\beta_j\}_{j=0}^{\infty}$ is said to be a \textit{weight
sequence}, if $\beta_0 =1$ and
\begin{equation}\label{eq-beta}
\liminf \beta_j^{\frac{1}{j}} \geq 1.
\end{equation}
For a Hilbert space $\cle$ and a weight sequence $\beta$, we let
$\wbg(\beta, \cle)$ denote the Hilbert space of all $\cle$-valued
analytic functions $f = \displaystyle \sum_{\bk \in \Zp} a_{\bk}
z^{\bk}$, $a_{\bk} \in \cle$ for all $\bk \in \Zp$, on
$\mathbb{B}^n$ such that
\[
\|f\|^2_{\wbg(\beta, \cle)} := \displaystyle \sum_{j=0}^{\infty}
\beta_j \sum_{|\bk| = j} \frac{1}{\rho_1(\bk)} \|a_{\bk}
\|^2_{\cle}= \displaystyle \sum_{\bk \in \Zp}
\frac{\beta_{|\bk|}}{\rho_1(\bk)} \|a_{\bk} \|^2_{\cle} < \infty,
\]
that is
\[
\wbg(\beta, \cle) = \Big\{ f  \in \clo(\mathbb{B}^n, \cle):
\|f\|_{\wbg(\beta, \cle)} < \infty \Big\}.
\]
Then $\wbg(\beta, \cle)$ is an $\cle$-valued reproducing kernel
Hilbert space corresponding to the kernel
\begin{equation}\label{eq-kernel beta}
K_{\beta}(\z, \w) = \sum_{j=0}^{\infty}\frac{1}{\beta_j} \langle \z,
\w \rangle^j I_{\cle} \quad \quad (\z, \w \in \mathbb{B}^n).
\end{equation}
In particular, for $\beta_j = \frac{j! (n-1)!}{(n+j-1)!}$ and
$\beta_j = \frac{j! n!}{(n+j)!}$, $j \in \mathbb{Z}_+$,
$\wbg(\beta,\cle)$ represents the $\cle$-valued Hardy space and the
Bergman space over $\mathbb{B}^n$, respectively.

We now proceed to construct the universal multiplier corresponding
to the weight sequence $\beta$ and the Hilbert space $\cle$. Let
\[
\gamma_0 =1 \quad \mbox{ and } \gamma_j = \Big(\frac{1}{\beta_j} -
\frac{1}{\beta_{j-1}}\Big)^{-1} \quad \quad (j \geq 1).
\]
Then $\gamma = \{\gamma_j\}_{j \in \mathbb{Z}_+}$ is also a weight
sequence and hence
\[
K_{\gamma}(\z, \w) = \sum_{j=0}^{\infty} \frac{1}{\gamma_j} \langle
\z, \w \rangle^j \quad \quad(\z, \w \in \mathbb{B}^n),
\]
is a positive-definite kernel on $\mathbb{B}^n$. Define
$\Psi_{\beta, \cle}: \mathbb{B}^n \to \clb(l^2(\Zp, \cle), \cle)$ by
\[
\Psi_{\beta, \cle}(\z)(\{a_{\bk}\}_{\bk\in\Zp}) = \sum_{\bk \in \Zp}
\Big(\sqrt{\frac{\rho_1(\bk)}{\gamma_{|\bk|}}} a_{\bk}\Big) z^{\bk},
\]
for all $\z \in \mathbb{B}^n$ and $\{a_{\bk}\}_{\bk \in \Zp} \in
l^2(\Zp,\cle)$. We must first show that $\Psi_{\beta, \cle}$ is
well-defined. For each $\z \in \mathbb{B}^n$ and $\{a_{\bk}\}_{\bk
\in \Zp} \in l^2(\Zp,\cle)$, we have
\[
\begin{split}
\Big\| \sum_{\bk \in \Zp}
\Big(\sqrt{\frac{\rho_1(\bk)}{\gamma_{|\bk|}}} a_{\bk}\Big) z^{\bk}
\Big\|_{\cle} & \leq \sum_{\bk \in \Zp}
\sqrt{\frac{\rho_1(\bk)}{\gamma_{|\bk|}}} | z^{\bk}|
\|a_{\bk}\|_{\cle}
\\
& \leq \Big( \sum_{\bk \in \Zp} \frac{\rho_1(\bk)}{\gamma_{|\bk|}}
|z|^{2\bk} \Big)^{\frac{1}{2}} \Big( \sum_{\bk\in\Zp}
\|a_{\bk}\|_{\cle}^2 \Big)^{\frac{1}{2}}
\\
& = \Big( \sum_{j=0}^{\infty} \frac{1}{\gamma_j} \langle \z, \z
\rangle^j \Big)^{\frac{1}{2}}  \|\{a_{\bk}\}_{\bk \in
\Zp}\|_{l^2(\Zp, \cle)}
\\
& = K_{\gamma}(\z, \z)^{\frac{1}{2}} \|\{a_{\bk}\}_{\bk \in
\Zp}\|_{l^2(\Zp, \cle)},
\end{split}
\]
that is
\[
\|\Psi_{\beta, \cle}(\z)(\{a_{\bk}\}_{\bk\in\Zp})\| \leq
K_{\gamma}(\z, \z)^{\frac{1}{2}} \|\{a_{\bk}\}_{\bk \in
\Zp}\|_{l^2(\Zp, \cle)}.
\]
It is again convenient to represent $\Psi_{\beta}(\z)$,
$\z\in\mathbb{B}^n$, as the row operator
\[ \
\Psi_{\beta, \cle}(\z)= \Big[\cdots
\sqrt{\frac{\rho_1(\bk)}{\gamma_{|\bk|}}} z^{\bk}I_{\cle} \cdots
\Big]_{\bk\in \Zp}.
\]
Now we prove that:

\begin{Lemma}
$\Psi_{\beta, \cle} \in \clm(\arv\big(l^2(\Zp,\cle)\big),
\wbg(\beta, \cle))$ and
\[
M_{\Psi_{\beta, \cle}}: \arv\big(l^2(\Zp,\cle)\big)\to
\wbg(\beta, \cle),
\]
is a co-isometry.
\end{Lemma}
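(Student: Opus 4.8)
The plan is to prove both assertions simultaneously by establishing the single kernel identity
\[
K_{\beta}(\z, \w) I_{\cle} = \frac{1}{1 - \langle \z, \w \rangle}\, \Psi_{\beta, \cle}(\z)\Psi_{\beta, \cle}(\w)^* \qquad (\z, \w \in \mathbb{B}^n),
\]
in which $K_1(\z, \w) = (1 - \langle \z, \w \rangle)^{-1}$ is the reproducing kernel of the scalar Drury--Arveson space. First I would use the row representation of $\Psi_{\beta, \cle}(\z)$ to read off $\Psi_{\beta, \cle}(\w)^*$ as the corresponding column operator from $\cle$ into $l^2(\Zp, \cle)$, which gives
\[
\Psi_{\beta, \cle}(\z)\Psi_{\beta, \cle}(\w)^* = \sum_{\bk \in \Zp} \frac{\rho_1(\bk)}{\gamma_{|\bk|}}\, z^{\bk}\bar{w}^{\bk}\, I_{\cle}.
\]
Grouping the sum according to $|\bk| = j$ and invoking the multinomial identity $\sum_{|\bk| = j} \rho_1(\bk) z^{\bk}\bar{w}^{\bk} = \langle \z, \w \rangle^j$ (valid since $\rho_1(\bk) = |\bk|!/\bk!$) collapses this to $\sum_{j \geq 0} \gamma_j^{-1}\langle \z, \w \rangle^j I_{\cle} = K_{\gamma}(\z, \w) I_{\cle}$.

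The remaining point is the scalar identity $K_{\beta}(\z, \w) = (1 - \langle \z, \w \rangle)^{-1} K_{\gamma}(\z, \w)$. Writing $t = \langle \z, \w \rangle$ and expanding $(1 - t)^{-1} = \sum_{l \geq 0} t^l$, the coefficient of $t^N$ on the right-hand side is $\sum_{j=0}^N \gamma_j^{-1}$, so the identity amounts to $\beta_N^{-1} = \sum_{j=0}^N \gamma_j^{-1}$ for every $N$. This I would verify by telescoping: since $\gamma_0 = 1 = \beta_0$ and $\gamma_j^{-1} = \beta_j^{-1} - \beta_{j-1}^{-1}$ for $j \geq 1$ directly from the definition of $\gamma$, the partial sum $\sum_{j=0}^N \gamma_j^{-1}$ collapses to $\beta_N^{-1}$. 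Combining this with the computation of the previous paragraph yields the displayed kernel identity.

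Finally, I would translate the kernel identity into the two claimed conclusions. Since the difference $K_{\beta}(\z, \w) I_{\cle} - K_1(\z, \w)\Psi_{\beta, \cle}(\z)\Psi_{\beta, \cle}(\w)^*$ vanishes identically, it is in particular a positive-definite kernel, so the reproducing kernel criterion for multipliers (cf. page 2412, \cite{BV2}) gives $\Psi_{\beta, \cle} \in \clm(\arv(l^2(\Zp, \cle)), \wbg(\beta, \cle))$ with $M_{\Psi_{\beta, \cle}}$ a contraction. For the co-isometry claim I would use the adjoint action on kernel functions, $M_{\Psi_{\beta, \cle}}^*(K_{\beta}(\cdot, \w)\eta) = K_1(\cdot, \w)\Psi_{\beta, \cle}(\w)^*\eta$, whence
\[
\big(M_{\Psi_{\beta, \cle}} M_{\Psi_{\beta, \cle}}^*\big)\big(K_{\beta}(\cdot, \w)\eta\big)(\z) = K_1(\z, \w)\Psi_{\beta, \cle}(\z)\Psi_{\beta, \cle}(\w)^*\eta = K_{\beta}(\z, \w)\eta,
\]
the last step being the kernel identity. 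As the functions $K_{\beta}(\cdot, \w)\eta$ span a dense subspace of $\wbg(\beta, \cle)$, this forces $M_{\Psi_{\beta, \cle}} M_{\Psi_{\beta, \cle}}^* = I$, so $M_{\Psi_{\beta, \cle}}$ is a co-isometry. The only genuine subtlety here is the telescoping relation linking $\beta$ to $\gamma$; convergence and well-definedness of $\Psi_{\beta, \cle}$ having already been settled just before the statement, the rest is bookkeeping.
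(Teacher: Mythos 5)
Your proof is correct and takes essentially the same route as the paper: both rest on the identity $\Psi_{\beta,\cle}(\z)\Psi_{\beta,\cle}(\w)^* = K_{\gamma}(\z,\w)I_{\cle} = (1-\langle \z,\w\rangle)K_{\beta}(\z,\w)I_{\cle}$ (your coefficient-comparison telescoping $\beta_N^{-1}=\sum_{j=0}^N\gamma_j^{-1}$ is the paper's computation of $(1-\langle\z,\w\rangle)K_{\beta}(\z,\w)$ read in reverse), from which the multiplier property follows via the positive-definiteness criterion and the co-isometry via totality of the kernel functions. Your explicit verification of $M_{\Psi_{\beta,\cle}}M_{\Psi_{\beta,\cle}}^*=I$ on the functions $K_{\beta}(\cdot,\w)\eta$ simply fleshes out the final step that the paper leaves implicit.
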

\NI \textsf{Proof:} For $\z$ and $\w$ in $\mathbb{B}^n$, we have
\[
\begin{split}
(1- \langle \z,\w \rangle)K_{\beta}(\z,\w) & =
\sum_{j=0}^{\infty}\frac{1}{\beta_j} \langle \z,\w \rangle^j -
\sum_{j=0}^{\infty}\frac{1}{\beta_j} \langle \z,\w \rangle^{j+1}
\\
& = \frac{1}{\beta_0} + \sum_{j=0}^{\infty} \Big(
\frac{1}{\beta_{j+1}}
\langle \z,\w \rangle^{j+1} - \frac{1}{\beta_j} \langle \z,\w \rangle^{j+1} \Big)
\\
& = \frac{1}{\beta_0} + \sum_{j=0}^{\infty}\Big(
\frac{1}{\beta_{j+1}} -
\frac{1}{\beta_j} \Big) \langle \z,\w \rangle^{j+1} \\
& = \sum_{j=0}^{\infty} \frac{1}{\gamma_j} \langle \z,\w \rangle^j,
\end{split}
\]
that is
\[
(1- \langle \z,\w \rangle)K_{\beta}(\z,\w) = K_{\gamma}(\z,\w).
\]
Hence from the matrix representation of $\Psi_{\beta, \cle}$ it follows that
\[
\begin{split}
\Psi_{\beta, \cle}(\z) \Psi_{\beta, \cle}(\w)^* & = \sum_{j=0}^{\infty}
\frac{1}{\gamma_j}
\sum_{|\bk|=j} \rho_{\bk} z^{\bk}\bar{w}^{\bk}I_{\cle} \\
& = \sum_{j=0}^{\infty} \frac{1}{\gamma_j} \langle \z,\w \rangle^j I_{\cle}\\
& = K_{\gamma}(\z,\w)I_{\cle} \\
& = (1- \langle \z,\w \rangle)K_{\beta}(\z,\w)I_{\cle},
\end{split}
\]
which implies
\begin{equation}\label{psibeta}
K_{\beta}(\z,\w)I_{\cle} - \frac{\Psi_{\beta}(\z)
\Psi_{\beta}(\w)^*}{1- \langle \z,\w \rangle} = 0,
\end{equation}
and so $\Psi_{\beta, \cle} \in \clm(\arv\big(l^2(\Zp,\cle)\big),
\wbg(\beta, \cle))$. The remaining part of the lemma follows from
\eqref{psibeta} and the fact that $\{K_{\beta}(\cdot, \w) \eta: \w
\in \mathbb{B}^n, \eta \in \cle\}$ is a total set in $\arv(\beta,
\cle)$. \qed

Given Hilbert spaces $\cle$ and $\cle_*$, we use $\cls
\clm(\arv(\cle), \wbg(\beta, \cle_*))$ to denote the set of all
contractive multipliers, that is
\[
\cls \clm(\arv(\cle), \wbg(\beta, \cle_*)) = \{\Phi \in
\clm(\arv(\cle), \wbg(\beta, \cle_*)): \|M_\Phi\| \leq 1\}.
\]
Now we are ready to prove the main theorem of this section.

\begin{Theorem} \label{factom}
Let $\cle$ and $\cle_*$ be Hilbert spaces, $\beta$ be a weight
sequence, and let $\Theta: \mathbb{B}^n \raro \clb(\cle, \cle_*)$ be
an analytic function. Then $\Theta \in \cls
\clm(\arv(\cle), \wbg(\beta, \cle_*))$ if and only if there
exists a multiplier $\tilde{\Theta} \in \cls \clm(\arv(\cle),
\arv(l^2(\Zp,\cle_*))$ such that
\[
\Theta(\z)= \Psi_{\beta, \cle_*}(\z) \tilde{\Theta}(\z) \quad \quad
(\z \in \mathbb{B}^n).
\]
\end{Theorem}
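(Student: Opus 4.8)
I want to prove an equivalence between $\Theta$ being a contractive multiplier from $\arv(\cle)$ to $\wbg(\beta,\cle_*)$ and the existence of a factorization $\Theta = \Psi_{\beta,\cle_*}\tilde\Theta$ through the universal multiplier, with $\tilde\Theta$ a contractive multiplier into $\arv(l^2(\Zp,\cle_*))$. The natural tool is Lemma~\ref{AgMc}, which characterizes factorizations $\Phi = \Psi\Theta$ through an analytic function $\Psi$ in terms of positive-definiteness of the kernel $\frac{\Psi(\z)\Psi(\w)^* - \Phi(\z)\Phi(\w)^*}{1 - \langle \z,\w\rangle}$. So the strategy is to massage our problem into exactly the shape of that lemma, with $\Psi = \Psi_{\beta,\cle_*}$, $\Phi = \Theta$, target space $\cle_* = \cle_*$, and inner coefficient space $\clf = l^2(\Zp,\cle_*)$.

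\medskip

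\emph{The easy direction.} First I would prove sufficiency. Suppose such a contractive $\tilde\Theta \in \cls\clm(\arv(\cle), \arv(l^2(\Zp,\cle_*)))$ exists with $\Theta(\z) = \Psi_{\beta,\cle_*}(\z)\tilde\Theta(\z)$. The multiplication operators compose, $M_\Theta = M_{\Psi_{\beta,\cle_*}} M_{\tilde\Theta}$, so $\|M_\Theta\| \leq \|M_{\Psi_{\beta,\cle_*}}\| \, \|M_{\tilde\Theta}\| \leq 1$, where I use that the preceding Lemma established $M_{\Psi_{\beta,\cle_*}}$ is a co-isometry (hence a contraction) and that $\tilde\Theta$ is contractive. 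One should check that $M_\Theta$ indeed maps into $\wbg(\beta,\cle_*)$, which is immediate since $M_{\Psi_{\beta,\cle_*}}$ has range in $\wbg(\beta,\cle_*)$. Thus $\Theta \in \cls\clm(\arv(\cle), \wbg(\beta,\cle_*))$.

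\medskip

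\emph{The harder direction.} For necessity, assume $\Theta \in \cls\clm(\arv(\cle),\wbg(\beta,\cle_*))$. Contractivity of $M_\Theta$ from $\arv(\cle)$ to $\wbg(\beta,\cle_*)$ translates, via the standard reproducing-kernel criterion (the same fact invoked on page~2412 of \cite{BV2} and used earlier in the paper), into positive-definiteness of the kernel $(\z,\w)\mapsto K_\beta(\z,\w)I_{\cle_*} - \frac{\Theta(\z)\Theta(\w)^*}{1-\langle\z,\w\rangle}$. Now the crucial computation from the preceding Lemma gives the identity $\Psi_{\beta,\cle_*}(\z)\Psi_{\beta,\cle_*}(\w)^* = (1-\langle\z,\w\rangle)K_\beta(\z,\w)I_{\cle_*}$, so that $K_\beta(\z,\w)I_{\cle_*} = \frac{\Psi_{\beta,\cle_*}(\z)\Psi_{\beta,\cle_*}(\w)^*}{1-\langle\z,\w\rangle}$. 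Substituting, the positive kernel becomes exactly $\frac{\Psi_{\beta,\cle_*}(\z)\Psi_{\beta,\cle_*}(\w)^* - \Theta(\z)\Theta(\w)^*}{1-\langle\z,\w\rangle}$, which is condition (i) of Lemma~\ref{AgMc} with $\Psi=\Psi_{\beta,\cle_*}$, $\Phi=\Theta$, $\clf=l^2(\Zp,\cle_*)$, $\cle_*=\cle_*$. Lemma~\ref{AgMc}(ii) then yields a contractive multiplier $\tilde\Theta \in \clm(\arv(\cle), \arv(l^2(\Zp,\cle_*)))$ with $\Theta(\z) = \Psi_{\beta,\cle_*}(\z)\tilde\Theta(\z)$ on $\mathbb{B}^n$, completing the proof.

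\medskip

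\emph{The main obstacle} is really only bookkeeping: one must confirm that the coefficient Hilbert space $l^2(\Zp,\cle_*)$ appearing as the domain of $\Psi_{\beta,\cle_*}$ is precisely the space $\clf$ that Lemma~\ref{AgMc} supplies, so that the lemma applies verbatim with no space-matching mismatch. The genuinely substantive input—the identity relating $\Psi_{\beta,\cle_*}(\z)\Psi_{\beta,\cle_*}(\w)^*$ to $K_\beta$—has already been carried out in the Lemma just proved, so the proof of Theorem~\ref{factom} amounts to assembling that identity with the reproducing-kernel contractivity criterion and a single invocation of Lemma~\ref{AgMc}.
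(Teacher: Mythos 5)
Your proof is correct and matches the paper's argument in its essential structure: for necessity, both you and the paper translate contractivity of $M_\Theta$ into positive-definiteness of $(\z,\w)\mapsto K_\beta(\z,\w)I_{\cle_*} - \frac{\Theta(\z)\Theta(\w)^*}{1-\langle\z,\w\rangle}$, use the identity \eqref{psibeta} to rewrite this as $\frac{\Psi_{\beta,\cle_*}(\z)\Psi_{\beta,\cle_*}(\w)^* - \Theta(\z)\Theta(\w)^*}{1-\langle\z,\w\rangle}$, and invoke Lemma \ref{AgMc} with $\clf = l^2(\Zp,\cle_*)$. Your sufficiency direction is a minor (and slightly more economical) variant: where the paper verifies positivity of $\Psi_{\beta,\cle_*}(\z)\bigl[\frac{I-\tilde\Theta(\z)\tilde\Theta(\w)^*}{1-\langle\z,\w\rangle}\bigr]\Psi_{\beta,\cle_*}(\w)^*$ by a kernel computation, you compose operators, $M_\Theta = M_{\Psi_{\beta,\cle_*}}M_{\tilde\Theta}$, and bound norms via the co-isometry lemma — both rest on the same two ingredients and are equally valid.
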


\NI \textsf{Proof:} Let $\tilde{\Theta} \in \cls \clm(\arv(\cle),
\arv(l^2(\Zp,\cle_*))$, and let $\Theta(\z)=\Psi_{\beta,
\cle_*}(\z)\tilde{\Theta}(\z)$ for all $\z \in \mathbb{B}^n$. Then
\[
\begin{split}
K_{\beta}(\z,\w)I_{\cle_*} - \frac{\Theta(\z)\Theta(\w)^*}{1-
\langle \z,\w \rangle} & = K_{\beta}(\z,\w)I_{\cle_*} -
\frac{\Psi_{\beta,
\cle_*}(\z)\tilde{\Theta}(\z)\tilde{\Theta}(\w)^*\Psi_{\beta,
\cle_*}(\w)^*}{1- \langle \z,\w \rangle}
\\
& = \frac{\Psi_{\beta, \cle_*}(\z)\Psi_{\beta, \cle_*}(\w)^*}{1 -
\langle \z,\w \rangle} - \frac{\Psi_{\beta,
\cle_*}(\z)\tilde{\Theta}(\z)\tilde{\Theta}(\w)^*\Psi_{\beta,
\cle_*}(\w)^*}{1- \langle \z,\w \rangle}
\\
& = \Psi_{\beta, \cle_*}(\z) \Big[ \frac{I_{l^2(\Zp,\cle_*)}-
\tilde{\Theta}(\z)\tilde{\Theta}(\w)^*}{1 - \langle \z,\w \rangle}
\Big] \Psi_{\beta, \cle_*}(\w)^*,
\end{split}
\]
for all $\z, \w \in \mathbb{B}^n$, where the last but one equality
follows from \eqref{psibeta}. Since $\tilde{\Theta}$ is a
contractive multiplier, it follows that
\[
(\z, \w) \mapsto K_{\beta}(\z,\w)I_{\cle_*} - \frac{\Theta(\z)
\Theta(\w)^*}{1- \langle \z,\w \rangle},
\]
is a positive definite kernel on $\mathbb{B}^n$, and so $\Theta \in
\cls \clm(\arv(\cle), \wbg(\beta, \cle_*))$. To prove the
converse we first note that $M_{\Theta}: \arv(\cle) \to
\wbg(\beta, \cle_*)$ is a contraction. Again, by \eqref{psibeta},
we have
\[
\begin{split}
K_{\beta}(\z,\w)I_{\cle_*} - \frac{\Theta(\z) \Theta(\w)^*}{1-
\langle \z,\w \rangle} & = \frac{\Psi_{\beta, \cle_*}(\z)
\Psi_{\beta, \cle_*}(\w)^*}{1- \langle \z,\w \rangle} -
\frac{\Theta(\z) \Theta(\w)^*}{1- \langle \z, \w \rangle}
\\
& = \frac{\Psi_{\beta, \cle_*}(\z)\Psi_{\beta, \cle_*}(\w)^* -
\Theta(\z) \Theta(\w)^*}{1- \langle \z,\w \rangle},
\end{split}
\]
for all $\z, \w \in \mathbb{B}^n$. Hence
\[
(\z, \w) \mapsto \frac{\Psi_{\beta, \cle_*}(\z)\Psi_{\beta,
\cle_*}(\w)^* - \Theta(\z) \Theta(\w)^*}{1- \langle \z,\w \rangle} \in\clb(\cle_*),
\]
is a positive-definite kernel on $\mathbb{B}^n$. The proof now
follows from Lemma \ref{AgMc}. \qed

The above theorem implies that the following diagram is commutative:

\setlength{\unitlength}{3mm}
 \begin{center}
 \begin{picture}(40,16)(0,0)
\put(12.7,3){$\arv(\cle)$}\put(19,1.6){$M_{\Theta}$}
\put(22.9,3){$\wbg(\beta, \cle_*)$} \put(22, 10){$\arv(l^2(\Zp,
\cle_*))$} \put(24,9.2){ \vector(0,-1){5}} \put(15.8,
4.3){\vector(1,1){5.5}} \put(16.4,
3.4){\vector(1,0){6}}\put(15.8,6.8){$M_{\tilde{\Theta}}$}\put(25.3,6.5){$M_{\Psi_{\beta, \cle_*}}$}
\end{picture}
\end{center}

We now turn to ``inner functions'' in $\clm(\arv(\cle),
\wbg(\beta, \cle_*))$. The concept of inner functions in the setting
of Bergman space (knows as the Bergman inner functions) is due to
Hedenmalm \cite{HH} (see also Olofsson \cite{AO1} and Eschmeier
\cite{ES} for weighted Bergman spaces in one and several variables,
respectively). The notion of inner functions (or $K$-inner
functions) in several variables was introduced in \cite{MB}.

A contractive multiplier $\Theta \in \clm(\arv(\cle),
\wbg(\beta, \cle_*))$ is said to be \textit{$K_{\beta}$-inner} if
\[
\|\Theta h\|_{\wbg(\beta,\cle_*)}= \|h\|_{\cle},
\]
for all $h \in \cle$ (that is, $M_{\Theta}|_{\cle}$ is an isometry), and
\[
\Theta \cle \perp  z^{\bk} \Theta \cle \quad \quad (\bk \in \Zp).
\]
In the case when  $\Theta \in \clm(\arv(\cle),
\arv( \cle_*))$ we simply say $\Theta$ is a $K$-inner multiplier.

In connection with this notice also that for a Hilbert space $\cle$
\[
 M_{\Psi_{\beta, \cle}}|_{l^2(\Zp,\cle)} : l^2(\Zp,\cle) \raro \wbg(\gamma,
 \cle),
\]
is an isometry. Indeed, for each
$\{a_{\bk}\}_{\bk \in \Zp} \in l^2(\Zp,\cle)$ and $\z\in
\mathbb{B}^n$, we have
\[
(M_{\Psi_{\beta, \cle}}(\{a_{\bk}\}_{\bk \in \Zp}))(\z) =
\Psi_{\beta, \cle}(\z)(\{a_{\bk}\}_{\bk\in\Zp}),
\]
and so
\[
\begin{split}
\|M_{\Psi_{\beta, \cle}}
(\{a_{\bk}\}_{\bk\in\Zp})\|^2_{\wbg(\gamma, \cle)} & =
\Big\|\sum_{\bk \in \Zp}
\Big(\sqrt{\frac{\rho_1(\bk)}{\gamma_{|\bk|}}} a_{\bk} \Big)
z^{\bk}\Big\|_{\wbg(\gamma, \cle)}^2
\\
& = \sum_{\bk \in \Zp} \frac{\rho_1(\bk)}{\gamma_{|\bk|}}
\|a_{\bk}\|^2_{\cle} \|z^{\bk}\|^2_{\wbg(\gamma, \cle)}
\\
& = \sum_{\bk \in \Zp} \frac{\rho_1(\bk)}{\gamma_{|\bk|}}
\|a_{\bk}\|^2_{\cle} \frac{\gamma_{|\bk|}}{\rho_1(\bk)}
\\
& = \|\{a_{\bk}\}_{\bk \in \Zp}\|_{l^2(\Zp,\cle)}^2.
\end{split}
\]

We now relate the idea of universal multipliers to uniqueness of
factorizations of multipliers in the context of Theorem
\ref{factom}.

\begin{Theorem}\label{thm-unique factorization}
Let $\cle$ and $\cle_*$ be Hilbert spaces, $\beta$ be a weight sequence, and let
\[
\Theta \in \cls \clm(\arv(\cle), \wbg(\beta, \cle_*)).
\]
If $\Theta$ is a $K_{\beta}$-inner multiplier then there exists a unique $K$-inner multiplier
\[
\tilde{\Theta} \in \cls \clm(\arv(\cle),
\arv(l^2(\Zp,\cle_*)),
\]
such that
\[
\Theta(\z)= \Psi_{\beta, \cle_*}(\z) \tilde{\Theta}(\z) \quad \quad
(\z \in \mathbb{B}^n).
\]
\end{Theorem}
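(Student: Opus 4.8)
The plan is to take the factorization already furnished by Theorem \ref{factom} and show that, when $\Theta$ is $K_\beta$-inner, the contractive factor is forced to be $K$-inner; uniqueness will then drop out almost for free. Since a $K_\beta$-inner $\Theta$ is contractive, Theorem \ref{factom} provides a contractive multiplier $\tilde{\Theta}\in\cls\clm(\arv(\cle),\arv(l^2(\Zp,\cle_*)))$ with $\Theta(\z)=\Psi_{\beta,\cle_*}(\z)\tilde{\Theta}(\z)$, equivalently $M_\Theta=R\,M_{\tilde{\Theta}}$ at the level of operators, where I abbreviate $R:=M_{\Psi_{\beta,\cle_*}}$. By the Lemma preceding Theorem \ref{factom}, $R$ is a co-isometry, so $P:=R^*R$ is the orthogonal projection onto $(\ker R)^\perp$. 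I would fix this $\tilde{\Theta}$ and prove it is $K$-inner.

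First I would extract two consequences of $\Theta$ being isometric on the constants $\cle$. For $h\in\cle$ the estimates $\|h\|=\|M_\Theta h\|=\|R\,M_{\tilde{\Theta}}h\|\leq\|M_{\tilde{\Theta}}h\|\leq\|h\|$ (the first inequality because $R$ is a contraction, the second because $\tilde{\Theta}$ is contractive) collapse to equalities. Hence (i) $M_{\tilde{\Theta}}|_\cle$ is an isometry, and (ii) $\|R\,M_{\tilde{\Theta}}h\|=\|M_{\tilde{\Theta}}h\|$; since a co-isometry preserves the norm of a vector exactly when that vector lies in the orthogonal complement of its kernel, (ii) says $M_{\tilde{\Theta}}\cle\perp\ker R$, i.e. $P\,M_{\tilde{\Theta}}h=M_{\tilde{\Theta}}h$ for all $h\in\cle$.

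The crux is the identity
\[
\langle M_\Theta h,\,M_z^{\bk}M_\Theta g\rangle_{\wbg(\beta,\cle_*)}=\langle M_{\tilde{\Theta}}h,\,M_z^{\bk}M_{\tilde{\Theta}}g\rangle_{\arv(l^2(\Zp,\cle_*))}\qquad(\bk\in\Zp,\ h,g\in\cle).
\]
To prove it I would use that $R$ is a multiplier, hence intertwines the shifts, $R\,M_z^{\bk}=M_z^{\bk}R$; then $\langle M_\Theta h,M_z^{\bk}M_\Theta g\rangle=\langle R\,M_{\tilde{\Theta}}h,\,R\,M_z^{\bk}M_{\tilde{\Theta}}g\rangle=\langle P\,M_{\tilde{\Theta}}h,\,M_z^{\bk}M_{\tilde{\Theta}}g\rangle$, and (ii) turns $P\,M_{\tilde{\Theta}}h$ into $M_{\tilde{\Theta}}h$. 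Taking $\bk=\bm{0}$ recovers (i); taking $\bk\ne\bm{0}$ and invoking that $\Theta$ is $K_\beta$-inner makes the left-hand side vanish, whence $\tilde{\Theta}\cle\perp z^{\bk}\tilde{\Theta}\cle$. Together with (i) this is exactly the $K$-innerness of $\tilde{\Theta}$, settling existence.

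For uniqueness, let $\tilde{\Theta}_1,\tilde{\Theta}_2$ be $K$-inner factors. Each is isometric on $\cle$, so by the argument giving (ii) each maps $\cle$ into $(\ker R)^\perp$; on the other hand $R(M_{\tilde{\Theta}_1}-M_{\tilde{\Theta}_2})h=M_\Theta h-M_\Theta h=0$ places $(M_{\tilde{\Theta}_1}-M_{\tilde{\Theta}_2})h$ in $\ker R$. Lying in both a subspace and its orthogonal complement, it vanishes, so $\tilde{\Theta}_1$ and $\tilde{\Theta}_2$ agree on the constants $\cle$; since a multiplier is determined by its action on constants (as $\tilde{\Theta}_i(\z)h=(M_{\tilde{\Theta}_i}h)(\z)$), this forces $\tilde{\Theta}_1=\tilde{\Theta}_2$. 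I expect the only delicate point to be the displayed identity, specifically the step $P\,M_{\tilde{\Theta}}h=M_{\tilde{\Theta}}h$, which is where the co-isometry of the universal multiplier and the placement of the wandering subspace $M_{\tilde{\Theta}}\cle$ off $\ker R$ must be used together; once this identity is in hand it converts the $K_\beta$-innerness of $\Theta$ directly into the $K$-innerness of $\tilde{\Theta}$, and everything else is bookkeeping.
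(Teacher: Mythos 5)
Your proposal is correct and follows essentially the same route as the paper: both obtain $\tilde{\Theta}$ from Theorem \ref{factom}, use the co-isometry of $M_{\Psi_{\beta,\cle_*}}$ and the collapsing norm chain $\|h\|=\|M_\Theta h\|\leq\|M_{\tilde{\Theta}}h\|\leq\|h\|$ to get that $M_{\tilde{\Theta}}|_{\cle}$ is isometric with $\tilde{\Theta}\cle\perp\ker M_{\Psi_{\beta,\cle_*}}$, and then transfer the orthogonality of $\Theta\cle$ to $\tilde{\Theta}\cle$ via the identity $\langle\tilde{\Theta}\eta,z^{\bk}\tilde{\Theta}\zeta\rangle=\langle\Theta\eta,z^{\bk}\Theta\zeta\rangle$. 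Your uniqueness step (the difference of two $K$-inner factors lands in $\ker M_{\Psi_{\beta,\cle_*}}\cap(\ker M_{\Psi_{\beta,\cle_*}})^{\perp}$) is just a repackaging of the paper's formula $\tilde{\Theta}\eta=M_{\Psi_{\beta,\cle_*}}^*\Theta\eta$, and in fact spells out slightly more explicitly than the paper does why \emph{every} $K$-inner factor, not just the constructed one, maps $\cle$ into $(\ker M_{\Psi_{\beta,\cle_*}})^{\perp}$.
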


\NI\textit{Proof.} If $\Theta \in \cls \clm(\arv(\cle),
\wbg(\beta, \cle_*))$, then by Theorem~\ref{factom}, we have
\[
\Theta = \Psi_{\beta, \cle_*} \tilde{\Theta}.
\]
for some $\tilde{\Theta} \in \cls \clm(\arv(\cle),
\wbg(l^2(\Zp,\cle_*))$. Now let ${\Theta}$ be
$K_{\beta}$-inner. We show that $\tilde{\Theta}$ is
$K$-inner. Let $\eta \in \cle$, and let
\[
\tilde{\Theta} \eta = f \oplus g \in \ker M_{\Psi_{\beta, \cle_*}}
\oplus (\ker M_{\Psi_{\beta, \cle_*}})^{\perp}.
\]
Since
\[
\|\eta\|_{\cle} = \|M_{\Theta} \eta\|_{\wbg(\beta, \cle_*)} =
\|M_{\Psi_{\beta, \cle_*}} M_{\tilde{\Theta}} \eta\|_{\wbg(\beta,
\cle_*)}
\]
and $M_{\Psi_{\beta, \cle_*}}$ is a co-isometry, we see that
\[
\begin{split}
\|\eta\|_{\cle} & = \|M_{\Psi_{\beta,\cle_*}} M_{\tilde{\Theta}}
\eta \|_{\wbg(\beta,\cle_*)}
\\
& = \|M_{\Psi_{\beta, \cle_*}} g \|_{\wbg(\beta,\cle_*)}
\\
& = \|g \|_{\arv (l^2(\Zp,\cle_*))}
\\
& \leq \|\tilde{\Theta} \eta \|_{\arv(l^2(\Zp,\cle_*))}
\\
& \leq \|\eta\|_{\cle} .
\end{split}
\]
It now follows that $\|\tilde{\Theta} \eta \|_{\arv (
l^2(\Zp,\cle_*) )} = \|\eta\|_{\cle}$ for all $\eta \in \cle$ and
\[
{\tilde{\Theta}} \cle \subseteq \Big(\ker M_{\Psi_{\beta, \cle_*}}
\Big)^\perp.
\]
This readily shows that
\[
M_{\Psi_{\beta, \cle_*}}^* M_{\Psi_{\beta, \cle_*}}|_{\tilde{\Theta} \cle} = I.
\]
Therefore, for $\eta, \zeta \in \cle$ and $\bk \in \mathbb{N}^n$, we see that
\[
\begin{split}
\langle \tilde{\Theta} \eta, z^{\bk} \tilde{\Theta} \zeta
\rangle_{\arv(l^2(\Zp, \cle_*))} & = \langle M_{\Psi_{\beta, \cle_*}}^* M_{\Psi_{\beta, \cle_*}} \tilde{\Theta} \eta,
\tilde{\Theta} z^{\bk} \zeta \rangle_{\arv(l^2(\Zp, \cle_*))}
\\
& = \langle \Psi_{\beta, \cle_*} \tilde{\Theta} \eta,
\Psi_{\beta, \cle_*} \tilde{\Theta} z^{\bk} \zeta
\rangle_{\wbg(\beta, \cle_*))}
\\
& = \langle {\Theta} \eta, {\Theta} z^{\bk} \zeta \rangle_{\wbg(\beta, \cle_*)}
\\
& = \langle {\Theta} \eta, z^{\bk} {\Theta} \zeta \rangle_{\wbg(\beta, \cle_*)},
\end{split}
\]
and hence the orthogonality condition of $K_\beta$-inner multiplier
$\Theta$ implies that of $\tilde{\Theta}$. Finally, since
\[
M_{\tilde{\Theta}} (z^{\bk} \eta) = z^{\bk} \tilde{\Theta} \eta =
z^{\bk} M_{\Psi_{\beta, \cle_*}}^* M_{\Psi_{\beta, \cle_*}}
\tilde{\Theta} \eta = z^{\bk} M_{\Psi_{\beta, \cle_*}}^* {\Theta}
\eta,
\]
for all $\eta \in \cle$ and $\bk \in \Zp$, it follows that $\tilde{\Theta}$ is unique.
This completes the proof of the theorem. \qed

In the particular case $n=1$, all the results obtained so far in
this section are due to Ball and Bolotnikov \cite{BV2}.

The discussion to this point motivates us to define wandering
subspaces of bounded linear operators. The notion of a wandering subspace was introduced by Halmos
\cite{H} in the context of invariant subspaces of shifts on vector-valued Hardy spaces. Let $T$ be an $n$-tuple of commuting operators on $\clh$,
and let $\clw$ be a closed subspace of $\clh$. If
\[
\clw \perp T^{\bk} \clw,
\]
for all $\bk \in \mathbb{N}^n$, then $\clw$ is called a \textit{wandering subspace} for $T$. We say
that $\clw$ is a \textit{generating wandering subspace} for $T$ if
in addition
\[
\clh = \overline{\mbox{span}}\{T^{\bk} \clw : \bk \in \Zp\}.
\]
Here, however, we aim at parameterizing wandering subspaces for $M_z
= (M_{z_1}, \ldots, M_{z_n})$ on $\wbg(\beta, \cle_*)$. Note, by virtue of \eqref{eq-beta} and \eqref{eq-kernel beta}, that the tuple of multiplication operator $M_z$ defines a pure row contraction on $\wbg(\beta, \cle_*)$. Let $\clw$
be a wandering subspace for $M_z$ on $\wbg(\beta, \cle_*)$. Clearly
\[
\bigvee_{\bk\in\Zp} {z}^{\bk}\clw,
\]
is a joint $M_z$-invariant subspace of $\wbg(\beta, \cle_*)$. Then
there exist a Hilbert space $\cle$ and a partial isometric
multiplier $\Theta\in \clm(\arv(\cle),\wbg(\beta, \cle_*))$ such
that
\[
\bigvee_{\bk\in\Zp} {z}^{\bk}\clw= \Theta \arv(\cle).
\]
Moreover, if
\[
\clf= \{ \eta \in \cle : M_{\Theta}^* M_{\Theta} \eta = \eta \}
\subseteq \cle,
\]
then the wandering subspace $\clw$ and the multiplier $\Theta$ are related as follows (see Theorem 6.6, \cite{MB}):
\[
\clw =\Theta\clf,
\]
and
\[
\Theta|_{\arv(\clf)} \in \cls\clm(\arv(\clf), \wbg(\beta,
\cle_*))
\]
is a $K_{\beta}$-inner function. Now  we apply Theorem~\ref{thm-unique
factorization} to the $K_{\beta}$-inner function $\Theta|_{\arv(\clf)}$
and get that
\[
\Theta|_{\arv{\clf}} = \Psi_{\beta, \cle_*} \tilde{\Theta},
\]
where $\tilde{\Theta} \in \cls \clm ( \arv(\clf),
\arv(l^2(\Zp, \cle_*)))$ is the unique $K$-inner multiplier.
In particular,
\[
\tilde{\clw} :=\tilde{\Theta}\clf,
\]
is a wandering subspace for $M_z$ on $\arv(l^2(\mathbb{N}^n,
\cle_*))$, and so
\[
\clw = \Psi_{\beta, \cle_*} \tilde{\Theta} \clf = {\Psi_{\beta,
\cle_*}}\tilde{\clw}.
\]
This yields the following parametrization of a wandering subspace
for $M_z$ on $\wbg(\beta, \cle_*)$.

\begin{Theorem}
\label{thm-wandering subspace} If $\clw$ is a wandering subspace for
$M_z$ on $\wbg(\beta, \cle_*)$, then there exists a wandering
subspace $\tilde{\clw}$ for $M_z$ on $\arv(l^2( \Zp,
\cle_*))$ such that
\[
\clw = {\Psi_{\beta, \cle_*}}\tilde{\clw},
\]
where $\Psi_{\beta,\cle_*}$ is the universal multiplier.
\end{Theorem}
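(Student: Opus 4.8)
The plan is to realize the invariant subspace generated by $\clw$ through a Beurling--Lax--Halmos representation, extract the associated $K_\beta$-inner multiplier, and then transport the problem to the Drury--Arveson space by the unique factorization established in Theorem~\ref{thm-unique factorization}. Since the universal multiplier $\Psi_{\beta,\cle_*}$ is exactly the factor appearing in that factorization, the image of the corresponding $K$-inner multiplier will furnish the desired wandering subspace $\tilde{\clw}$.

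First I would form the joint $M_z$-invariant subspace $\bigvee_{\bk\in\Zp} z^{\bk}\clw$ of $\wbg(\beta,\cle_*)$. Because $M_z$ is a pure row contraction on $\wbg(\beta,\cle_*)$ (a consequence of \eqref{eq-beta} and \eqref{eq-kernel beta}), the Beurling--Lax--Halmos type theorem for weighted Bergman spaces (Theorem 4.4, \cite{JS2}) supplies a Hilbert space $\cle$ and a partially isometric multiplier $\Theta\in\clm(\arv(\cle),\wbg(\beta,\cle_*))$ with $\bigvee_{\bk\in\Zp} z^{\bk}\clw=\Theta\,\arv(\cle)$. Setting $\clf=\{\eta\in\cle: M_\Theta^*M_\Theta\eta=\eta\}$, the structure result of \cite{MB} (Theorem 6.6) identifies $\clw=\Theta\clf$ and shows that the restriction $\Theta|_{\arv(\clf)}$ is a $K_\beta$-inner element of $\cls\clm(\arv(\clf),\wbg(\beta,\cle_*))$.

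Next I would apply Theorem~\ref{thm-unique factorization} to the $K_\beta$-inner multiplier $\Theta|_{\arv(\clf)}$, obtaining the factorization
\[
\Theta|_{\arv(\clf)}=\Psi_{\beta,\cle_*}\,\tilde{\Theta},
\]
where $\tilde{\Theta}\in\cls\clm(\arv(\clf),\arv(l^2(\Zp,\cle_*)))$ is the unique $K$-inner multiplier. Defining $\tilde{\clw}:=\tilde{\Theta}\clf$, the $K$-inner orthogonality $\tilde{\Theta}\clf\perp z^{\bk}\tilde{\Theta}\clf$, which is part of the definition of $K$-inner, says precisely that $\tilde{\clw}$ is a wandering subspace for $M_z$ on $\arv(l^2(\Zp,\cle_*))$. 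Combining the two displays then gives $\clw=\Theta\clf=\Psi_{\beta,\cle_*}\tilde{\Theta}\clf=\Psi_{\beta,\cle_*}\tilde{\clw}$, as required.

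The genuine content of the argument is concentrated in Theorem~\ref{thm-unique factorization}, whose hypothesis is the $K_\beta$-inner property; thus the main thing to verify carefully is that $\Theta|_{\arv(\clf)}$ really is $K_\beta$-inner, that is, that $M_\Theta$ is isometric on $\clf$ and that $\Theta\clf\perp z^{\bk}\Theta\clf$. I expect this to be the only delicate point, and it is exactly what the cited representation from \cite{MB} provides once $\clf$ is taken to be the fixed subspace of $M_\Theta^*M_\Theta$. The remaining passages are purely formal, since the wandering property of $\tilde{\clw}$ and the final identity follow directly from the definition of $K$-inner and the factorization.
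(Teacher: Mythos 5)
Your proposal is correct and follows essentially the same route as the paper: the Beurling--Lax--Halmos representation of $\bigvee_{\bk\in\Zp} z^{\bk}\clw$ via a partially isometric multiplier $\Theta$, the identification $\clw=\Theta\clf$ with $\Theta|_{\arv(\clf)}$ a $K_\beta$-inner multiplier from Theorem 6.6 of \cite{MB}, and then the factorization $\Theta|_{\arv(\clf)}=\Psi_{\beta,\cle_*}\tilde{\Theta}$ from Theorem \ref{thm-unique factorization}, with $\tilde{\clw}=\tilde{\Theta}\clf$. You have also correctly located the only delicate point (the $K_\beta$-inner property of $\Theta|_{\arv(\clf)}$), which the paper likewise delegates to the cited result of \cite{MB}.
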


The above parametrizations of wandering subspaces is significantly
different from that of Eschmeier \cite{ES} and Olofsson \cite{AO1}.

\newsection{Factorizations and representations of Characteristic Functions}\label{sec -fact ch
fn}

We continue our study of pure $m$-hypercontractions by focusing on
the universal multipliers $\Psi_{\beta, \clh}$ and relate this idea
to the notion of the transfer functions on $\mathbb{B}^n$. Here we
follow the notation introduced in Section \ref{sec-4}.

Fix $m > 1$ and a weight sequence $\beta(m) = \{\beta_j(m)\}$ as
\[
\beta_j(m) = \binom{m+j-1}{j}^{-1},
\]
for all $j \in \mathbb{Z}_{+}$. Then the corresponding weight sequence
$\gamma(m) = \{\gamma_j(m)\}$ is given by
\[
\begin{split}
{\gamma_j(m)}^{-1} & = \frac{1}{\beta_j(m)} - \frac{1}{\beta_{j-1}(m)}
\\
& = \frac{(m+j-1)!}{j!(m-1)!} - \frac{(m+j-2)!}{(j-1)!(m-1)!}
\\
& = \frac{(m+j-2)!}{j!(m-2)!},
\end{split}
\]
that is
\[
\gamma_j(m) = \binom{m+j-2}{j}^{-1},
\]
for all $j \geq 1$. Then for a Hilbert space $\clf$, one finds that
\begin{equation}\label{eq-H2beta}
\wbg(\beta(m), \clf) = \bH_m(\mathbb{B}^n, \clf) \quad \mbox{and} \quad
\wbg(\gamma(m), \clf) = \bH_{m-1}(\mathbb{B}^n, \clf).
\end{equation}
Now let $T$ be a pure $m$-hypercontraction on $\clh$, and let
$(\cle, B, D)$ be a characteristic triple of $T$. Then $\Phi_T$, the
characteristic function of $T$ corresponding to $(\cle, B, D)$,
defined by
\[
\Phi_T(\z) =  \Big(\sum_{\bk \in \Zp} \sqrt{\rho_{m-1}(\bk)} D_{\bk}
z^{\bk} \Big) + D_{m,T^*} (I_{\clh} - Z T^*)^{-m} ZB \quad \quad (\z \in
\mathbb{B}^n),
\]
is a $\clb(\cle, \cld_{m, T^*})$-valued analytic function on
$\mathbb{B}^n$. Moreover
\[
\Phi_T \in \clm(\arv(\cle), \bH_m(\mathbb{B}^n, \cld_{m,T^*})),
\]
is a partially isometric multiplier (see Theorem \ref{thm-analytic
model}). Now, in view of \eqref{eq-H2beta}, Theorem \ref{factom}
implies that
\[
\Phi_T = \Psi_{\beta(m), \cld_{m, T^*}} \tilde{\Phi}_T,
\]
for some  contractive multiplier $\tilde{\Phi}_T \in
\clm(\arv(\cle), \arv(l^2(\Zp,\cld_{m,T^*})))$. Here
\[
\Psi_{\beta(m), \cld_{m, T^*}} \in \clm(\arv(l^2(\Zp,
\cld_{m,T^*})), \bH_m(\mathbb{B}^n, \cld_{m, T^*})),
\]
is the universal multiplier defined by
\[
\Psi_{\beta(m), \cld_{m, T^*}}(\z) = \Big[\cdots
\sqrt{\frac{\rho_1(\bk)}{\gamma_{|\bk|}(m)}} z^{\bk}I_{\cld_{m,T^*}}
\cdots \Big]_{\bk \in \Zp},
\]
for all $\z \in \mathbb{B}^n$. However, in our particular situation
\[
\gamma_{|\bk|}(m) = \binom{m + |\bk| - 2}{|\bk|},
\]
and hence
\[
\sqrt{\frac{\rho_1(\bk)}{\gamma_{|\bk|}(m)}} = \sqrt{\rho_{m-1}(\bk)},
\]
for all $\bk \in \Zp$. Then the universal multiplier is given by
\begin{equation}\label{eq-Psibetam}
\Psi_{\beta(m), \cld_{m, T^*}}(\z) = \Big[\cdots
\sqrt{\rho_{m-1}(\bk)} z^{\bk}I_{\cld_{m,T^*}} \cdots \Big]_{\bk \in
\Zp}.
\end{equation}

Now we proceed to compute an explicit representation of $\tilde{\Phi}_T$. To this end, we first recall that
\[
U = \begin{bmatrix}
T^*  &  B \\
\cmt  &  D
\end{bmatrix}
: \clh \oplus \cle \to \clh^n \oplus \lnt
\]
is unitary (see Theorem \ref{thm-ch triple}). We claim that
$\tilde{\Phi}_T$ is the transfer of the unitary $U$ (see \cite{AT}), that is,
\[
\tilde{\Phi}_T(\z) = D + \cmt (I_{\clh}-ZT^*)^{-1}ZB \quad \quad (\z
\in \mathbb{B}^n).
\]
Indeed, first note that $\tilde{\Phi}_T \in
\clm(\arv(\cle), \arv(l^2(\Zp, \cld_{m,T^*})))$ (cf. \cite{AT}) and
\[
\begin{split}
\tilde{\Phi}_T(z) &= D + \cmt(I_{\clh} - ZT^*)^{-1}ZB
\\
& = D + \cmt \sum_{i=1}^n \big( \sum_{\bl \in \Zp} \rho_1(\bl) z^{\bl}T^{*\bl}
\big) z_i B_i
\\
& =  D + \sum_{i=1}^n  \sum_{\bl \in \Zp}\big(\rho_1(\bl) \cmt T^{*\bl} B_i
\big) z^{\bl + \bm{e}_i},
\end{split}
\]
for all $\z \in \mathbb{B}^n$, where
\[
B = \begin{bmatrix} B_1 \\ \vdots \\ B_n \end{bmatrix} : \cle \raro
\clh^n,
\]
and $\bm{e}_i \in \Zp$ has a $1$ in the $i$-th position and $0$
elsewhere, $i = 1, \ldots, n$. Then, by applying the matrix
representation of $\cmt$ (see \eqref{eq-cmt matrix}), we have
\[
\tilde{\Phi}_T(\z) = \begin{bmatrix} \vdots\\
\displaystyle  D_{\bk} + \sum_{i=1}^n \sum_{\bl \in \Zp}
\big(\sqrt{\rho_{m-1}(\bk)} \rho_1(\bl) \dmt T^{* (\bk + \bl)}B_i \big) z^{\bl + \bm{e}_i}\\
\vdots
\end{bmatrix}_{\bk \in \Zp},
\]
and consequently, by \eqref{eq-Psibetam}, we have
\[
\Psi_{\beta(m), \cld_{m, T^*}}(\z) \tilde{\Phi}_T (\z) =  \sum_{\bk
\in \Zp}
\Big( \sqrt{\rho_{m-1}(\bk)} D_{\bk} + \sum_{i=1}^n \sum_{\bl \in \Zp} \big(
{\rho_{m-1}(\bk)} \rho_1(\bl) \dmt T^{* (\bk + \bl)} B_i \big) z^{\bl +
\bm{e}_i} \Big) z^{\bk}.
\]
Also note that
\[
\begin{split}
D_{m,T^*} (I_{\clh} - Z T^*)^{-m} ZB & = D_{m,T^*} (I_{\clh} - Z
T^*)^{-(m-1)} (I_{\clh} - Z T^*)^{-1} ZB
\\
& = D_{m, T^*} \Big(\sum_{\bk \in \Zp} {\rho_{m-1}(\bk)} T^{* \bk}
z^{\bk} \Big) \Big(\sum_{\bl \in \Zp} \rho_1(\bl) T^{*\bl} z^{\bl} \Big) ZB
\\
& = D_{m, T^*} \sum_{i=1}^n z_i \Big(\sum_{\bk \in \Zp}
{\rho_{m-1}(\bk)} T^{* \bk} z^{\bk} \Big) \Big(\sum_{\bl \in \Zp}
\rho_1(\bl) T^{*\bl} z^{\bl} \Big) B_i
\\
& = \sum_{\bk, \bl \in \Zp} \sum_{i=1}^n \Big(\ {\rho_{m-1}(\bk)} \rho_1(\bl) D_{m, T^*} T^{* (\bk + \bl)} B_i \Big) z^{\bk + \bl + \bm{e}_i}.
\end{split}
\]
From this it readily follows that
\[
\begin{split}
\Psi_{\beta(m), \cld_{m, T^*}}(\z) \tilde{\Phi}_T (\z) & = \Big(\sum_{\bk \in \Zp} \sqrt{\rho_{m-1}(\bk)}
D_{\bk} z^{\bk} \Big) + D_{m,T^*} (I_{\clh} - Z T^*)^{-m} ZB
\\
& = \Phi_T(\z),
\end{split}
\]
for all $\z \in \mathbb{B}^n$. This leads to the following theorem
on explicit representation of $\tilde{\Phi}_T$:

\begin{Theorem}\label{thm-factchar}
Let $m \geq 1$, $T$ be a pure $m$-hypercontraction on a Hilbert space
$\clh$, and let $(\cle, B, D)$ be a characteristic triple of $T$. If
$\Phi_T$ is the characteristic function of $T$ corresponding to
$(\cle, B, D)$, then
\[
\Phi_T(\z) = \Psi_{\beta(m), \cld_{m, T^*}}(\z) \tilde{\Phi}_T (\z) \quad \quad (\z \in \mathbb{B}^n),
\]
where
\[
\tilde{\Phi}_T(z) = D + \cmt (I_{\clh} - ZT^*)^{-1} ZB  \quad \quad (\z \in \mathbb{B}^n),
\]
is the transfer function of the canonical unitary matrix
\[\begin{bmatrix}
T^*  &  B \\
\cmt  &  D
\end{bmatrix}: \clh \oplus \cle \to \clh^n \oplus \lnt
\]
corresponding to the characteristic triple $(\cle,B,D)$ of $T$, and

\[
\Psi_{\beta(m), \cld_{m, T^*}}(\z) = \begin{cases} \begin{bmatrix} \cdots &
\sqrt{\rho_{m-1}(\bk)} z^{\bk}I_{\cld_{m,T^*}} & \cdots \end{bmatrix}_{\bk
\in \Zp} & \mbox{if~} m \geq 2
\\
\begin{bmatrix}I_{\cld_{1, T^*}} & 0 & 0 & \cdots \end{bmatrix} & \mbox{if~} m=1, \end{cases}
\]
for all $\z \in \mathbb{B}^n$.
\end{Theorem}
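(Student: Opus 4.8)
The plan is to produce the factor $\tilde{\Phi}_T$ explicitly and to verify the identity $\Phi_T = \Psi_{\beta(m), \cld_{m, T^*}} \tilde{\Phi}_T$ by a direct power-series computation, rather than merely asserting the existence of \emph{some} factor via Theorem \ref{factom}. First I would record the weight data: with $\beta_j(m) = \binom{m+j-1}{j}^{-1}$ the companion sequence is $\gamma_j(m) = \binom{m+j-2}{j}^{-1}$, so that $\wbg(\beta(m), \clf) = \bH_m(\mathbb{B}^n, \clf)$ and $\wbg(\gamma(m), \clf) = \bH_{m-1}(\mathbb{B}^n, \clf)$ as in \eqref{eq-H2beta}, and the relevant universal multiplier is the row operator $\Psi_{\beta(m), \cld_{m, T^*}}$ of \eqref{eq-Psibetam} whose $\bk$-th entry is $\sqrt{\rho_{m-1}(\bk)}\, z^{\bk} I_{\cld_{m,T^*}}$. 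Since $\Phi_T$ is a contractive (indeed partially isometric) multiplier into $\bH_m(\mathbb{B}^n, \cld_{m,T^*})$ by Theorem \ref{thm-analytic model}, Theorem \ref{factom} already furnishes \emph{a} contractive factor; the substance of the present statement is to pin that factor down as the transfer function of the canonical unitary $U$.

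The first genuine step is to check that the proposed $\tilde{\Phi}_T$ is an admissible factor at all. Because $U$ is unitary (Theorem \ref{thm-ch triple}), the transfer-function realization recalled in the introduction (cf. \cite{AT}) applies directly and shows that $\tilde{\Phi}_T(\z) = D + \cmt(I_{\clh} - ZT^*)^{-1} ZB$ is a contractive multiplier in $\cls \clm(\arv(\cle), \arv(l^2(\Zp, \cld_{m,T^*})))$. Thus both factors in the claimed product are legitimate multipliers, and it remains only to compare Taylor coefficients.

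The core of the argument is a bookkeeping of multinomial weights. Using the Neumann expansion $(I_{\clh} - ZT^*)^{-1} = \sum_{\bl \in \Zp} \rho_1(\bl)\, z^{\bl} T^{*\bl}$ and the column form \eqref{eq-cmt matrix} of $\cmt$, whose $\bk$-th component is $h \mapsto \sqrt{\rho_{m-1}(\bk)}\, \dmt T^{*\bk} h$, I would read off the $\bk$-th entry of the column $\tilde{\Phi}_T$ as $D_{\bk} + \sum_{i=1}^n \sum_{\bl} \sqrt{\rho_{m-1}(\bk)}\, \rho_1(\bl)\, \dmt T^{*(\bk+\bl)} B_i\, z^{\bl + \bm{e}_i}$, and then form the row-times-column product against $\Psi_{\beta(m), \cld_{m, T^*}}$. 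The contribution of the $D_{\bk}$ terms is $\sum_{\bk} \sqrt{\rho_{m-1}(\bk)}\, D_{\bk} z^{\bk}$, exactly the first summand of $\Phi_T$. In the remaining contribution the weight $\sqrt{\rho_{m-1}(\bk)}$ supplied by the universal multiplier combines with the $\sqrt{\rho_{m-1}(\bk)}$ already inside $\cmt$ to give the full coefficient $\rho_{m-1}(\bk)$, and one recognizes
\[
\sum_{\bk \in \Zp} \rho_{m-1}(\bk)\, z^{\bk} T^{*\bk} = (I_{\clh} - ZT^*)^{-(m-1)}.
\]
Hence the $B$-contribution collapses to $\dmt (I_{\clh} - ZT^*)^{-(m-1)} (I_{\clh} - ZT^*)^{-1} ZB = \dmt (I_{\clh} - ZT^*)^{-m} ZB$, which is precisely the second summand of $\Phi_T$. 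Adding the two recovers $\Phi_T = \Psi_{\beta(m), \cld_{m, T^*}} \tilde{\Phi}_T$.

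I expect the main obstacle to be exactly this coefficient matching: one must verify that the resolvent splitting $(I_{\clh} - ZT^*)^{-m} = (I_{\clh} - ZT^*)^{-(m-1)} (I_{\clh} - ZT^*)^{-1}$ aligns the $\rho_{m-1}$-weights, absorbed symmetrically into the universal multiplier and into $\cmt$, with the $\rho_1$-weights carried by the Neumann series of the transfer function, after correctly reindexing the triple sum over $\bk$, $\bl$, and the rows $B_i$ of $B$. Finally I would treat $m=1$ separately: there $\rho_0$ collapses $\Psi_{\beta(1), \cld_{1,T^*}}$ to the coordinate projection $[\, I_{\cld_{1, T^*}}\ 0\ 0\ \cdots\,]$ and $\bH_1(\mathbb{B}^n, \cld_{1,T^*}) = \arv(\cld_{1,T^*})$, so the identity degenerates to $\Phi_T = \tilde{\Phi}_T$, i.e. the characteristic function of a row contraction \emph{is} the transfer function of $U$, in agreement with the one-hypercontraction theory.
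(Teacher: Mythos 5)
Your proposal is correct and takes essentially the same route as the paper's proof: the paper likewise expands $\tilde{\Phi}_T$ via the Neumann series $(I_{\clh}-ZT^*)^{-1}=\sum_{\bl}\rho_1(\bl)z^{\bl}T^{*\bl}$ and the column form of $\cmt$, multiplies by the row $\Psi_{\beta(m),\cld_{m,T^*}}$ so the two $\sqrt{\rho_{m-1}(\bk)}$ factors combine, matches the result with $\dmt(I_{\clh}-ZT^*)^{-m}ZB$ through the splitting $(I_{\clh}-ZT^*)^{-m}=(I_{\clh}-ZT^*)^{-(m-1)}(I_{\clh}-ZT^*)^{-1}$, and treats $m=1$ separately using $C_{1,T}h=(D_{1,T^*}h,0,0,\ldots)$. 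The only nuance is that for $m=1$ the conclusion is $\Phi_T=\Psi_{\beta(1),\cld_{1,T^*}}\tilde{\Phi}_T$, i.e.\ $\Phi_T$ is the zeroth component of $\tilde{\Phi}_T$ rather than literally $\tilde{\Phi}_T$ itself, a distinction your coordinate-projection remark already effectively captures.
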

\begin{proof}
It remains only to prove the special case $m=1$. Let $T$ be a pure $1$-hypercontraction, and let $(\cle, B, D)$ be a characteristic triple of $T$. Then \eqref{eq-cmt} implies that
\[
C_{1, T}(h) = ( D_{1, T^*} h,  0,  0,  \ldots) \quad \quad (h \in \clh),
\]
and so
\[
\Phi_T= D_0 + D_{1,T^*} (I_{\clh} - ZT^*)^{-1} ZB,
\]
for all $\z \in \mathbb{B}^n$. It now easily follows that
\[
\Phi_T=\Psi_{\beta(1), \cld_{1, T^*}}(\z) \tilde{\Phi}_T (\z) \quad \quad (\z \in \mathbb{B}^n).
\]
\end{proof}

We will refer
\[
\tilde{\Phi}_T \in \clm(\arv(\cle), \bH_m(l^2(\Zp, \cld_{m, T^*})),
\]
as the \textit{canonical transfer function} of $T$ corresponding to
the characteristic triple $(\cle, B, D)$.

\newsection{Hypercontractions and row-contractions}\label{sec 6}

The present theory of pure $m$-hypercontractions leads to many
interesting questions of analytic models, such as any possible
relationships between characteristic functions or canonical transfer
functions of $m'$-hypercontractions, $1 \leq m' < m$. Here we
address this issue. Also we compare the ideas of characteristic
functions of pure $m$-hypercontractions and characteristic functions
of pure row contractions.

First, we examine our construction of characteristic triples for pure
$1$-hypercontractions. Before doing so
we recall that the \textit{characteristic function} \cite{BES} of a
commuting row contraction (that is, $1$-hypercontraction) $T = (T_1,
\ldots, T_n)$ on a Hilbert space $\clh$ is the operator-valued
analytic function
\[
\Theta_T(\z) = [-T + D_{1, T^*}(I_{\clh} - ZT^*)^{-1}ZD_T]|_{\cld_T}
\in \clb(\cld_T, \cld_{1, T^*}) \quad \quad (\z \in \mathbb{B}^n),
\]
where $D_T = (I_{\clh^n} - T^* T)^{\frac{1}{2}}$ and $\cld_T =
\overline{\mbox{ran}} D_T$. Observe also that $\Theta_T$ is the
transfer function corresponding to the unitary (colligation) matrix
\[
\begin{bmatrix}
T^* & D_T\\
 D_{1, T^*} & -T
\end{bmatrix} : \clh \oplus \cld_T \to \clh^n \oplus \cld_{1, T^*},
\]
and $\Theta_T \in \clm(\arv(\cld_T), \arv(\cld_{1, T^*}))$ (cf. \cite{BES}).
In the following, we shall identify $\cld_{1,T^*}$ with
\[
\cld_{1, T^*} \oplus \{0\} \oplus \{0\} \oplus \cdots \subset l^2(\Zp,\cld_{1, T^*}),
\]
and view $\Theta_T\in \clm(\arv(\cld_T), \arv(\cld_{1, T^*} \oplus \{0\} \oplus \{0\} \oplus \cdots))$.

\begin{Theorem}\label{thm-row-hyper}
Let $T$ be a pure row contraction on $\clh$. Then there exists a
characteristic triple $(\cle, B, D)$ of $T$ such that $\cld_T
\subseteq \cle$ and
\[
\Theta_T(\z) = \tilde{\Phi}_T (\z)|_{\cld_T} \quad \quad (\z \in \mathbb{B}^n),
\]
where $\Tilde{\Phi}_T$, defined by
\[
\Tilde{\Phi}_T(\z) = D + C_{1, T} (I_{\clh} - Z T^*)^{-1} Z B\quad \quad (\z \in \mathbb{B}^n),
\]
and $\Theta_T$ are the canonical transfer function corresponding to $(\cle, B, D)$ and the characteristic function of $T$, respectively.
\end{Theorem}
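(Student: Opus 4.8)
The plan is to realize the characteristic triple of $T$ (as a pure $1$-hypercontraction) directly from the unitary colligation that already represents $\Theta_T$, exploiting the degenerate form of $C_{1,T}$ when $m=1$. Recall that for $m=1$ one has $\rho_0(\bk) = \delta_{\bk, \bm{0}}$, so that $C_{1,T}h = (D_{1,T^*}h, 0, 0, \ldots)$; thus $C_{1,T}$ takes values only in the $\bm{0}$-th copy of $\cld_{1,T^*}$ inside $\lnt$. Consequently, under the identification of $\cld_{1,T^*}$ with $\cld_{1,T^*} \oplus \{0\} \oplus \cdots \subset \lnt$, the isometry $X_T = \begin{bmatrix} T^* \\ C_{1,T} \end{bmatrix}$ is nothing but the first column $\begin{bmatrix} T^* \\ D_{1,T^*} \end{bmatrix}$ of the unitary colligation $V = \begin{bmatrix} T^* & D_T \\ D_{1,T^*} & -T \end{bmatrix}: \clh \oplus \cld_T \to \clh^n \oplus \cld_{1,T^*}$ associated with $\Theta_T$ (isometry of $X_T$ being also \eqref{eq-TCI} at $m=1$). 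The point is that the second column of $V$ supplies exactly the part of $(\mbox{ran}\,X_T)^\perp$ sitting in the $\bm{0}$-th copy, while all remaining copies of $\cld_{1,T^*}$ are orthogonal to $\mbox{ran}\,X_T$ for free.

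Guided by this, I would set $\cle = \cld_T \oplus \big(\bigoplus_{\bk \in \Zp,\, \bk \neq \bm{0}} \cld_{1,T^*}\big)$, so that $\cld_T \subseteq \cle$ as required, and define $Y_T = \begin{bmatrix} B \\ D\end{bmatrix}: \cle \to \clh^n \oplus \lnt$ by declaring $B|_{\cld_T} = D_T$, $(D|_{\cld_T})\,\xi = (-T\xi, 0, 0, \ldots)$ (the $\bm{0}$-th copy), $B = 0$ on the remaining summands, and letting $D$ act as the natural inclusion of $\bigoplus_{\bk \neq \bm{0}}\cld_{1,T^*}$ onto the copies indexed by $\bk \neq \bm{0}$. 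To confirm that $(\cle, B, D)$ is a characteristic triple I would check that $\begin{bmatrix} X_T & Y_T\end{bmatrix}$ is unitary. Unitarity of $V$ gives that $\begin{bmatrix} D_T \\ -T\end{bmatrix}(\cld_T)$ is an isometric copy filling $(\clh^n \oplus (\cld_{1,T^*})_{\bm{0}}) \ominus \mbox{ran}\,X_T$; the inclusion on the remaining summands is isometric with range $\bigoplus_{\bk \neq \bm{0}}(\cld_{1,T^*})_{\bk}$, which is orthogonal both to $\mbox{ran}\,X_T$ and to $Y_T(\cld_T)$. Hence $Y_T$ is an isometry whose range is precisely $(\mbox{ran}\,X_T)^\perp$, so the block matrix is unitary.

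Finally I would compute the canonical transfer function $\tilde{\Phi}_T(\z) = D + C_{1,T}(I_{\clh} - ZT^*)^{-1} ZB$ restricted to $\cld_T$. For $\xi \in \cld_T$ we have $B\xi = D_T\xi$, and since $C_{1,T}$ lands in the $\bm{0}$-th copy, $C_{1,T}(I_{\clh} - ZT^*)^{-1}ZD_T\xi = (D_{1,T^*}(I_{\clh} - ZT^*)^{-1}ZD_T\xi, 0, 0, \ldots)$; combined with $D\xi = (-T\xi, 0, 0, \ldots)$, the value $\tilde{\Phi}_T(\z)\xi$ is supported entirely in the $\bm{0}$-th copy and equals $-T\xi + D_{1,T^*}(I_{\clh} - ZT^*)^{-1}ZD_T\xi$. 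Under the stated identification of $\cld_{1,T^*}$ with its $\bm{0}$-th copy, this is exactly $\Theta_T(\z)\xi$, proving $\Theta_T(\z) = \tilde{\Phi}_T(\z)|_{\cld_T}$.

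The only genuine subtlety, and the point I would be most careful about, is keeping the several identifications consistent: that the isometry $X_T$, the first column of $V$, and the $\bm{0}$-th copy of $\cld_{1,T^*}$ all match up, and that the transfer-function computation collapses onto that single copy. Everything else is the routine verification that the assembled block operator is unitary, for which the prior knowledge that $V$ is unitary does all the real work.
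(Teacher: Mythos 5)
Your proposal is correct and is essentially the paper's own proof: you construct exactly the same triple $\cle = \cld_T \oplus \big(l^2(\Zp,\cld_{1,T^*}) \ominus (\cld_{1,T^*},0,\ldots)\big)$ with $B = [D_T, 0]$, $D$ acting as $-T$ into the $\bm{0}$-th copy and as the natural inclusion on the remaining summands, and perform the same collapse of the transfer function onto the $\bm{0}$-th copy. The only difference is that you make explicit the unitarity verification via the known unitary colligation $\begin{bmatrix} T^* & D_T \\ D_{1,T^*} & -T \end{bmatrix}$, which the paper dismisses as obvious — a welcome bit of added detail, not a different argument.
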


\begin{proof}
Let $T$ be a pure row contraction (that is, pure $1$-hypercontraction). Set
\[
\cle:= \cld_T \oplus \tilde{\cle},
\]
where $\tilde{\cle} = l^2(\Zp,\cld_{1, T^*}) \ominus (\cld_{1, T^*}, 0,\cdots)$.
Define $B =[D_T,0]: \cle \to \clh^n$ by
\[
B(f, \{\alpha_{\bk}\}_{\bk \in \Zp}) = D_T f,
\]
and $D: \cle \to l^2(\Zp,\cld_{1, T^*})$ by
\[
\Big(D(f, \{\alpha_{\bk}\}_{\bk \in \Zp})\Big)(\bl) =
\begin{cases} -Tf & \mbox{if}~ \bl = \bm{0}
\\
\alpha_{\bl} & \mbox{otherwise} \end{cases}
\]
for all $f \in \cld_T$ and $\{\alpha_{\bk}\}_{\bk \in \Zp}
\in \tilde{\cle}$. Finally, define $C_{1, T} : \clh \raro l^2(\Zp,
\cld_{1, T^*})$ by
\[
C_{1,T} h = (D_{1,T^*}h,0,\cdots)\quad (h\in\clh).
\]
It is obvious that
\[
T T^* + C_{1, T}^* C_{1, T} = I_{\clh},
\]
and
\[
\begin{bmatrix}
T^* & B \\
  C_{1, T} & D
\end{bmatrix} : \clh \oplus \cle_1 \to \clh^n \oplus l^2(\Zp,\cld_{1, T^*}),
\]
is unitary, which implies that $(\cle, B, D)$ is a characteristic
triple of the $1$-hypercontraction $T$. The canonical transfer
function corresponding to $(\cle, B, D)$ is given by
\[
\Tilde{\Phi}_T(\z) = D + C_{1, T} (I_{\clh} - Z T^*)^{-1} Z B \quad \quad (\z \in \mathbb{B}^n).
\]
Then it readily follows that $\Theta_T(\z)=\tilde{\Phi}_T (\z)|_{\cld_T}$ for all
$\z \in \mathbb{B}^n$. This completes the proof of the theorem.
\end{proof}

We refer to the characteristic triple constructed above for
a pure $1$-hypercontraction as the {\it canonical characteristic triple}.

Now let $1 \leq m_1 < m_2$ and let $T$ be a pure
$m_2$-hypercontraction on $\clh$. Then $T$ is also a pure
$m_1$-hypercontraction. Suppose that $(\cle_i,B_i,D_i)$ is a
characteristic triple of the $m_i$-hypercontraction $T$, $i = 1, 2$. Then
\[
U_i=\begin{bmatrix}
T^* & B_{i} \\
C_{m_i, T} & D_{i}
\end{bmatrix} : \clh \oplus \cle_i \to \clh^n \oplus l^2(\Zp, \cld_{m_i,T^*}),
\]
is the unitary operator corresponding to the $m_i$-hypercontraction $T$, $i = 1, 2$. For simplicity of notation, we denote $\tilde{\Phi}_{T, m_i}$ the canonical transfer function corresponding to $(\cle_i,B_i,D_i)$, $i=1,2$. Since (see \eqref{eq-TCI})
\[
C_{m_i,T}^* C_{m_i,T} = I_{\clh} - TT^* \quad \quad (i=1,2),
\]
we have
\[
C_{m_1,T}^* C_{m_1,T} = C_{m_2,T}^* C_{m_2,T}.
\]
Also, according to \eqref{eq-matrix}, we have
\[
B_1B_1^* = B_2 B_2^*
\]
and
\[
D_iB_i^* = - C_{m_i, T} T,
\]
for $i = 1, 2$. It now follows by Douglas' range inclusion theorem that
\[
YC_{m_2,T} = C_{m_1,T} \quad \mbox{and} \quad XB_{1}^*=B_{2}^*,
\]
for some isometry $Y \in \clb(\overline{\mbox{ran} }\ C_{m_2,T}, l^2(\Zp,\cld_{m_1,T^*}))$
and unitary $X\in \clb(\overline{\mbox{ran}}\ B_{1}^*, \overline{\mbox{ran}}\ B_{2}^*)$. Thus
\[
D_1B_1^*=-C_{m_1,T}T=-YC_{m_2,T}T=YD_2B_2^*=YD_2X B_1^*,
\]
and so
\[
D_1|_{(\ker B_{1})^{\perp}}=YD_2X.
\]
This and the definition of $\tilde{\Phi}_{T, m_i}$, $i = 1, 2$, gives
\[
\begin{split}
\tilde{\Phi}_{T, m_1}(\z)|_{(\ker B_{1})^{\perp}}
&= \big[D_{1}+ C_{m_1, T}(I_{\clh} - ZT^*)^{-1}ZB_{1}\big]|_{(\ker B_{1})^{\perp}}
\\
& = YD_{2}X + YC_{m_2,T}(I_{\clh} - ZT^*)^{-1}Z B_{2} X
\\
& = Y \tilde{\Phi}_{T, m_2}(\z) X.
\end{split}
\]
This establishes the following relationship between canonical transfer functions:

\begin{Theorem}\label{tranre1}
Let $1 \leq m_1 < m_2$, $T$ be a pure $m_2$-hypercontraction on $\clh$,
and let $(\cle_i,B_i,D_i)$ be characteristic triple of the
$m_i$-hypercontraction $T$, $i = 1, 2$. Then there exist an isometry
$Y \in \clb(\overline{\mbox{ran} }\ C_{m_2,T},
l^2(\Zp,\cld_{m_1,T^*}))$ and a unitary $X\in \clb(\overline{\mbox{ran}}\
B_{1}^*, \overline{\mbox{ran}}\ B_{2}^*)$ such that
\[
\tilde{\Phi}_{T, m_1}(\z)|_{(\ker B_{1})^{\perp}}
= Y \tilde{\Phi}_{T, m_2}(\z) X \quad \quad (\z \in \mathbb{B}^n),
\]
where $\tilde{\Phi}_{T, m_i}$ is the canonical transfer function
corresponding to the characteristic triple $(\cle_i, B_i, D_i)$, $i = 1, 2$.
\end{Theorem}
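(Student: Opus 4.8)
The plan is to read off, from the unitarity of the two colligations $U_1$ and $U_2$, enough algebraic relations among $T$, $B_i$, $C_{m_i,T}$ and $D_i$ to manufacture the intertwiners $Y$ and $X$ via Douglas' factorization lemma, and then to substitute these relations directly into the transfer-function formula for $\tilde{\Phi}_{T,m_1}$. The guiding observation is that, although the two triples are built over different coefficient spaces $\cle_1$, $\cle_2$ and different target sequence spaces $l^2(\Zp,\cld_{m_i,T^*})$, the ``input'' and ``output'' data are all governed by the single tuple $T$, and hence agree up to unitary and isometric identifications.

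First I would record the algebraic input. By \eqref{eq-TCI}, for $i = 1,2$ one has $TT^* + C_{m_i,T}^* C_{m_i,T} = I_{\clh}$, so that $C_{m_1,T}^* C_{m_1,T} = C_{m_2,T}^* C_{m_2,T} = I_{\clh} - TT^*$; in particular $\|C_{m_1,T}h\| = \|C_{m_2,T}h\|$ for every $h \in \clh$. Reading off \eqref{eq-matrix} for each colligation gives $T^*T + B_i B_i^* = I_{\clh^n}$ and $C_{m_i,T}T + D_i B_i^* = 0$, whence $B_1 B_1^* = B_2 B_2^* = I_{\clh^n} - T^*T$ and $D_i B_i^* = -C_{m_i,T}T$. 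From the first modulus identity, Douglas' lemma produces a well-defined isometry $Y$ on $\overline{\mbox{ran}}\, C_{m_2,T}$, determined by $Y C_{m_2,T} = C_{m_1,T}$, with values in $l^2(\Zp,\cld_{m_1,T^*})$. From $B_1 B_1^* = B_2 B_2^*$ one gets $\|B_1^* \xi\| = \|B_2^* \xi\|$ for all $\xi$, so the assignment $B_1^* \xi \mapsto B_2^* \xi$ extends to a unitary $X : \overline{\mbox{ran}}\, B_1^* \raro \overline{\mbox{ran}}\, B_2^*$ with $X B_1^* = B_2^*$.

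With $Y$ and $X$ in hand, the two substitutions I need are $D_1|_{(\ker B_1)^\perp} = Y D_2 X$ and $B_1|_{(\ker B_1)^\perp} = B_2 X$. For the first I would chain the relations above,
\[
D_1 B_1^* = -C_{m_1,T}T = -Y C_{m_2,T}T = Y D_2 B_2^* = Y D_2 X B_1^*,
\]
and since $\overline{\mbox{ran}}\, B_1^* = (\ker B_1)^\perp$, continuity upgrades equality on $\mbox{ran}\, B_1^*$ to $D_1|_{(\ker B_1)^\perp} = Y D_2 X$. For the second, taking adjoints in $X B_1^* = B_2^*$ (after extending $X$ by zero on $\ker B_1$) yields $B_1|_{(\ker B_1)^\perp} = B_2 X$. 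Substituting both identities, together with $C_{m_1,T} = Y C_{m_2,T}$, into
\[
\tilde{\Phi}_{T,m_1}(\z)|_{(\ker B_1)^\perp} = D_1|_{(\ker B_1)^\perp} + C_{m_1,T}(I_{\clh} - ZT^*)^{-1} Z\, B_1|_{(\ker B_1)^\perp}
\]
collapses the right-hand side to $Y\big[D_2 + C_{m_2,T}(I_{\clh} - ZT^*)^{-1} Z B_2\big] X = Y \tilde{\Phi}_{T,m_2}(\z) X$ for all $\z \in \mathbb{B}^n$, which is the asserted factorization.

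The step demanding the most care—rather than any genuine difficulty—is the domain and range bookkeeping in the final paragraph: one must confirm that $X$ carries $(\ker B_1)^\perp$ onto $(\ker B_2)^\perp$ so that $B_2 X$ and $D_2 X$ are meaningful, and that $\overline{\mbox{ran}}\, B_i^* = (\ker B_i)^\perp$, so that passing from identities on ranges to the restricted operators is legitimate. Once these identifications are fixed and the two $ZB_i$ terms are matched through $X$, the factorization falls out, the result being essentially a bookkeeping consequence of the unitarity of the colligations together with \eqref{eq-TCI}.
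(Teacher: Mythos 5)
Your proof is correct and follows essentially the same route as the paper: the identities $C_{m_1,T}^*C_{m_1,T}=C_{m_2,T}^*C_{m_2,T}$, $B_1B_1^*=B_2B_2^*$ and $D_iB_i^*=-C_{m_i,T}T$ extracted from \eqref{eq-TCI} and \eqref{eq-matrix}, Douglas' lemma producing $Y$ and $X$, the chain $D_1B_1^*=-C_{m_1,T}T=-YC_{m_2,T}T=YD_2B_2^*=YD_2XB_1^*$ yielding $D_1|_{(\ker B_1)^\perp}=YD_2X$, and substitution into the transfer-function formula. If anything, you are slightly more careful than the paper, which leaves the identity $B_1|_{(\ker B_1)^\perp}=B_2X$ (via adjoints of $XB_1^*=B_2^*$) and the range bookkeeping $\mbox{ran}\,X=(\ker B_2)^\perp$ implicit in its final display.
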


\begin{Remark}
\label{re}
Let $\clf$, $\clf_*$, $\cle$ and $\cle_*$ be Hilbert spaces, and let
\[
U=\begin{bmatrix} A & B\\ C & D \end{bmatrix}:
\clf \oplus \cle\to \clf_*\oplus \cle_*,
\]
be a unitary. Suppose that $\Phi$ is the transfer function corresponding to $U$, that is
\[
\Phi(\z)= D+ C (I-ZA)^{-1}ZB \quad \quad (\z \in \mathbb{B}^n).
\]
Then $\Phi|_{(\ker B)^{\perp}}$ is the purely contractive part
of the contractive operator-valued analytic function
$\Phi$ on $\mathbb B^n$ in the sense of Sz.-Nagy and Foias \cite[Chapter V, Proposition 2.1]{NF}.
This follows from the observation that the maximal subspace of $\cle$
 where $D$ is an isometry is $\ker B$ and
 $D|_{\ker B}: \ker B \to \ker C^*$ is a unitary.
 Moreover, $\Phi|_{(\ker B)^{\perp}}$
 is the transfer function of the unitary
\[
\begin{bmatrix} A & B|_{(\ker B)^{\perp}}\\ C & D|_{(\ker B)^{\perp}} \end{bmatrix}:
\clf \oplus (\ker B)^{\perp}\to \clf_*\oplus \overline{\mbox{ran}} C.
\]
From this point of view, $\tilde{\Phi}_{T, m_1}(\z)|_{(\ker B_{1})^{\perp}}$, $\z \in \mathbb{B}^n$,
in the conclusion of Theorem ~\ref{tranre1} is the purely contractive part of $\tilde{\Phi}_{T, m_1}$. Moreover, $\mbox{ran} X =(\ker B_{2})^{\perp}$ implies that $Y\tilde{\Phi}_{T, m_1}(.)X$ coincides with
the purely contactive part of $\tilde{\Phi}_{T, m_1}$. Therefore Theorem ~\ref{tranre1} implies that
the purely contractive part of $\tilde{\Phi}_{T, m_1}$ coincides with the purely contractive part of $\tilde{\Phi}_{T, m_2}$.
\end{Remark}

We continue with the hypothesis that $T$ is a pure $m$-hypercontraction, $m > 1$. Let $(\cle_m, B_m, D_m)$ be a characteristic triple
of the pure $m$-hypercontraction $T$ and $\tilde{\Phi}_{T,m}$ be the
corresponding canonical transfer function. Since $T$ is also a
pure $1$-hypercontraction, consider the canonical characteristic triple $(\cle, B, D)$ of $T$
as obtained in the proof of Theorem \ref{thm-row-hyper}. Let $\tilde{\Phi}_T$ be
the canonical transfer function corresponding to $(\cle, B, D)$.
Then by Theorem ~\ref{tranre1},
\[
\tilde{\Phi}_{T}(\z)|_{(\ker B)^{\perp}} = Y \tilde{\Phi}_{T, m}(\z) X \quad \quad (\z \in \mathbb{B}^n),
\]
for some isometry $Y \in \clb(\overline{\text{ran}}\ C_{m,T}, l^2(\Zp,\cld_{1,T^*}))$ and unitary
$ X\in \clb(\overline{\mbox{ran}}\
B^*, \overline{\mbox{ran}}\ B_{m}^*)$. Moreover (see the construction of $B$ in the proof of Theorem \ref{thm-row-hyper})
\[
(\ker B)^{\perp} = \cld_T,
\]
and hence by Theorem \ref{thm-row-hyper}, it follows that
\[
\Theta_T(\z)=Y \tilde{\Phi}_{T,m}(\z)X \quad \quad (\z \in \mathbb{B}^n).
\]
Therefore, we have the following theorem:

\begin{Theorem}\label{transre2}
Let $m \geq 2$, $T$ be a pure $m$-hypercontraction
on $\clh$, and let $(\cle_m,B_m,D_m)$ be a characteristic triple of $T$.
Then there exist an isometry
$Y \in \clb(\overline{\text{ran}}\ C_{m,T}, l^2(\Zp,\cld_{1, T^*}))$
and a unitary $ X\in \clb(\cld_T, \overline{\mbox{ran}}\ B_{m}^*)$ such that
\[
\Theta_T(\z) =Y \tilde{\Phi}_{ T,m}(\z)X \quad \quad (\z \in \mathbb{B}^n),
\]
where $\Theta_T$ and $\tilde{\Phi}_{m, T}$
denote the characteristic function
of the row contraction $T$ and the canonical
transfer function of $T$ corresponding to the
characteristic triple $(\cle_m,B_m,D_m)$, respectively.
\end{Theorem}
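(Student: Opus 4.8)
The plan is to deduce this theorem as a combination of Theorem~\ref{tranre1} (specialized to $m_1 = 1$ and $m_2 = m$) and Theorem~\ref{thm-row-hyper}, both already available. First I would record that since $T$ is a pure $m$-hypercontraction with $m \geq 2$, it is in particular a pure $1$-hypercontraction (a row contraction); thus one may form its canonical characteristic triple $(\cle, B, D)$ as constructed in the proof of Theorem~\ref{thm-row-hyper}, together with the associated canonical transfer function $\tilde{\Phi}_T(\z) = D + C_{1,T}(I_{\clh} - ZT^*)^{-1} ZB$. Theorem~\ref{thm-row-hyper} then supplies the identity $\Theta_T(\z) = \tilde{\Phi}_T(\z)|_{\cld_T}$ on $\mathbb{B}^n$, which is the bridge between the row-contraction characteristic function $\Theta_T$ and the $1$-hypercontraction canonical transfer function.

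Next I would apply Theorem~\ref{tranre1} to the pair of triples $(\cle, B, D)$ (for $m_1 = 1$) and $(\cle_m, B_m, D_m)$ (for $m_2 = m$). This yields an isometry $Y \in \clb(\overline{\mbox{ran}}\, C_{m,T}, l^2(\Zp, \cld_{1,T^*}))$ and a unitary $X \in \clb(\overline{\mbox{ran}}\, B^*, \overline{\mbox{ran}}\, B_m^*)$ with
\[
\tilde{\Phi}_T(\z)|_{(\ker B)^{\perp}} = Y\, \tilde{\Phi}_{T,m}(\z)\, X \quad (\z \in \mathbb{B}^n).
\]
The crux is then to identify the subspace $(\ker B)^\perp$. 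From the explicit form $B = [D_T\ 0]$ on $\cle = \cld_T \oplus \tilde{\cle}$ in the proof of Theorem~\ref{thm-row-hyper}, one has $B(f, \{\alpha_{\bk}\}) = D_T f$; since $D_T$ is self-adjoint with $\cld_T = \overline{\mbox{ran}}\, D_T = (\ker D_T)^\perp$, it is injective on $\cld_T$, so $\ker B = \{0\} \oplus \tilde{\cle}$ and hence $(\ker B)^\perp = \cld_T$. Consequently $\overline{\mbox{ran}}\, B^* = \cld_T$, so the unitary $X$ in fact maps $\cld_T$ onto $\overline{\mbox{ran}}\, B_m^*$, matching exactly the domain demanded in the statement.

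Finally I would splice the two identities together: since $(\ker B)^\perp = \cld_T$, the left-hand side above equals $\tilde{\Phi}_T(\z)|_{\cld_T} = \Theta_T(\z)$ by Theorem~\ref{thm-row-hyper}, giving
\[
\Theta_T(\z) = Y\, \tilde{\Phi}_{T,m}(\z)\, X \quad (\z \in \mathbb{B}^n),
\]
which is the asserted relation with $Y$ and $X$ of the required type. The argument is essentially bookkeeping once Theorems~\ref{tranre1} and \ref{thm-row-hyper} are in hand; the only genuine point requiring care is the kernel computation $(\ker B)^\perp = \cld_T$, which hinges on the self-adjointness (hence injectivity on its range closure) of the defect operator $D_T$ built into the canonical triple of Theorem~\ref{thm-row-hyper}.
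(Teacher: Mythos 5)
Your proposal is correct and follows essentially the same route as the paper: the paper also specializes Theorem~\ref{tranre1} to $m_1=1$, $m_2=m$ with the canonical characteristic triple $(\cle,B,D)$ from Theorem~\ref{thm-row-hyper}, identifies $(\ker B)^{\perp}=\cld_T$, and splices in the identity $\Theta_T(\z)=\tilde{\Phi}_T(\z)|_{\cld_T}$. Your explicit justification that $\ker B=\{0\}\oplus\tilde{\cle}$ (via injectivity of $D_T$ on $\cld_T=(\ker D_T)^{\perp}$) is correct and in fact fills in a detail the paper leaves to the reader with the remark ``see the construction of $B$.''
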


\vspace{0.1in} \noindent\textbf{Acknowledgement:}
The first named author acknowledges IISc Bangalore and IIT Bombay for
 warm hospitality and his research is supported by the institute post-doctoral fellowship of IIT Bombay.
The research of the second named author is supported by  DST-INSPIRE Faculty
Fellowship No. DST/INSPIRE/04/2015/001094. The research of the third
named author is supported in part by the Mathematical Research
Impact Centric Support (MATRICS) grant, File No : MTR/2017/000522,
by the Science and Engineering Research Board (SERB), Department of
Science \& Technology (DST), Government of India, and NBHM (National
Board of Higher Mathematics, India) Research Grant NBHM/R.P.64/2014.

\bibliographystyle{amsplain}

\end{document}